\numberwithin{equation}{section}
\theoremstyle{definition}
\newtheorem{definition}[equation]{Definition}
\newtheorem{remark}[equation]{Remark}
\newtheorem{example}[equation]{Example}
\newtheorem{question}[equation]{Question}
\newtheorem{notation}[equation]{Notation}
\theoremstyle{plain}
\newtheorem{corollary}[equation]{Corollary}
\newtheorem{proposition}[equation]{Proposition}
\newtheorem{lemma}[equation]{Lemma}
\newtheorem{conjecture}[equation]{Conjecture}
\newtheorem{theorem}[equation]{Theorem}
\newcommand{\Fun}{\text{Fun}}
\newcommand{\Cat}{\text{Cat}}
\newcommand{\cat}{\Cat_\infty}
\newcommand{\te}[1]{{\textnormal{#1}}}
\newcommand{\minus}{-}
\newcommand{\op}{\textup{op}}
\newcommand{\C}{\mathcal{C}}
\newcommand{\D}{\mathcal{D}}
\newcommand{\lax}{\textup{lax}}
\newcommand{\Corr}{\textup{Corr}}
\newcommand{\fin}{\textup{Fin}}
\renewcommand{\S}{\mathcal{S}}
\newcommand{\Sch}{\textup{Sch}}
\title{Uniqueness of Six-Functor Formalisms}
\author{Adam Dauser, Josefien Kuijper}
\begin{document}
	\maketitle
    \begin{abstract}
        We present an alternative formulation of Scholze’s notions of cohomologically proper and cohomologically étale with respect to an abstract six-functor formalism. These conditions guarantee canonical isomorphisms between the direct and exceptional direct images for certain “proper” morphisms, and between the inverse and exceptional inverse images for certain “étale” morphisms. Using this framework, we prove Scholze’s conjecture, showing that a six-functor formalism with sufficiently many cohomologically proper and étale morphisms is uniquely determined by the tensor product and inverse image functors, and can be obtained by a construction of Liu-Zheng and Mann. Additionally, we show that a generalisation of the conjecture fails, and propose a measure of this failure in terms of $K$-theory. 
    \end{abstract}
    \tableofcontents
    
  \section{Introduction}

  Many cohomology theories, such as Betti or \'etale cohomology, the cohomology of solid quasicoherent sheaves or of ind-coherent sheaves, are most naturally viewed as part of a six-functor formalism. We illustrate this throughout this introduction using \'etale cohomology. Let $\mathcal{C}$ be the $\infty$-category of geometric objects, whose cohomology a given theory computes; e.g. the category of schemes in the case of étale cohomology. For every object $X$ of $\mathcal{C}$, a six-functor formalism provides $\mathcal{D}(X)$, the symmetric monoidal $\infty$-category of all possible coefficient objects, together with restriction maps $f^*: \mathcal{D}(Y)\to \mathcal{D}(X)$. In the example of étale cohomology, $\D(X)$ is the stable $\infty$-category of complexes of \'etale sheaves on $X$ with constructible cohomology. For each map $f: X\to Y$, there is a relative notion of cohomology formalized by the right-adjoint functor $f_*: \mathcal{D}(X)\to \mathcal{D}(Y)$ to $f^*$, which recovers the cohomology of $X$ as $g_*1_X$, where $g: X\to \te{pt}$ and $1_X$ is the monoidal unit of $\mathcal{D}(X)$. Further, for a certain class $E$ of maps, there is a relative notion of cohomology with compact supports provided by a functor $f_!: \mathcal{D}(X)\to \mathcal{D}(Y)$. In \'etale cohomology, this can be defined for separated maps of finite type.   All this comes equipped with a whole host of compatibility data providing base change and projection formulas. 
  
  In practice, six-functor formalisms are almost always constructed using the following ``algorithm'', which was first executed in \cite{SGA4} but generally formalized in \cite{LZa} and \cite{LZb}.
    \begin{enumerate}
        \item Define the derived category of coefficents and the restriction functors between them as a functor $\mathcal{D}^*: \mathcal{C}^{\te{op}}\to \te{CAlg}(\te{Cat}_\infty)$;
        \item Find an appropriate class of maps $P$, where $f_*$ satisfies\footnote{This is now a property instead of additional data.} base change and the projection formula. E.g. the class of proper maps of schemes. Find another class $I$ of maps, where $f^*$ has a left-adjoint $f_\natural$ instead, which satisfies analogous conditions. E.g. open immersions of schemes;
        \item Define $f_!=j_\natural \bar{f}_*$ for any factorization $X\overset{j}{\to} \overline{X}\overset{\bar{f}}{\to} Y$, where $j\in I$ and $\bar{f}\in P$. Therefore in the resulting six-functor formalism, one necessarily has isomorphisms $f_!\cong f_*$ for any $f$ in $P$, and $j^*\cong j^!$ for $j$ in $I$.   
    \end{enumerate} 

    Liu and Zheng construct an intricate $\infty$-categorical machine to formalize one particular way to construct a six-functor formalism, such that $f_!$ and all proper base change or projection formulas associated to it do not depend on this factorization. For a concise exposition, see also \cite[Proposition A.5.10]{mann_thesis}. We will refer to this as \emph{the Liu-Zheng construction}. One can now ask the following question.

    \begin{question}
        Under which conditions and in which sense is this construction unique?
    \end{question}

 Before we further investigate this question, we recall a more formal treatment of the data that constitutes a six-functor formalism. Formally, we consider a six-functor formalism as a lax symmetric monoidal functor
$$\D:\Corr(\C,E)^\otimes \to \cat$$
for $(\C,E)$ some \textit{geometric set-up}, as in \cite[Appendix A.5]{mann_thesis}. Here $\C$ is any small $\infty$-category with limits, and $E$ a set of homotopy classes of morphisms that is closed under composition and pullbacks, and $\Corr(\C,E)^\otimes$ is the symmetric monoidal $(\infty,1)$-category of correspondences in $\C$, where the forward-pointing arrows are required to be in $E$. Such a functor $\D$ explicitly encodes a functor $f^*:\D(Y) \to \D(X)$ for every morphism $f:X\to Y$ in $\C$, as the image of the correspondence
$$Y \xleftarrow{f} X \xrightarrow{=} X $$
and for $ g\in E $, a functor $g_!:\D(X) \to \D(Y)$ as image of the morphism 
$$X \xleftarrow{=} X \xrightarrow{g} Y. $$
Since $\D$ is lax symmetric monoidal, the diagonal morphism $\Delta$ induces a symmetric monoidal structure on $\D(X)$:
$$\D(X)\times D(X) \to \D(X\times X) \xrightarrow{\Delta^*} \D(X)$$
We assume moreover that $f^*$ and $g_!$ have right adjoints $f_*$ resp. $g^!$, and the induced symmetric monoidal structure on $\D(X)$ is closed. The functoriality of  $\D$ provides a pullback square 
	\begin{center}
		\begin{tikzcd}
			X'\arrow[r, "\bar{f}"]\arrow[d, "\bar{g}"]& Y'\arrow[d, "g"]\\
			X\arrow[r, "f"] & Y
		\end{tikzcd}
	\end{center}
 with a natural  \textit{base change}  isomorphism $g^*f_! \cong\bar{f}_!\bar{g}^*$ (see also \cite[Proposition A.5.8]{mann_thesis}).  Similarly, the symmetric monoidal structure on $\Corr(\C,E)$ combined with the functoriality of  $\D$ gives a natural isomorphism 
$f_!(f^*A \otimes B) \cong A \otimes f_!B$ for all $A$ in  $\D(Y)$ and $B$ in  $\D(X)$ known as the \textit{projection formula}.
Note that the existence of natural isomorphisms $f_*\cong f_!$ or $f^* \cong f^!$ for certain morphisms is not part of this data, although they should be present in any six-functor formalism that is a result of the Liu-Zheng construction. 

\subsection{Uniqueness}
For \emph{any} six-functor formalism
$$\D:\Corr(\C,E)^\otimes \to \cat$$
there are intrinsically defined classes $P\subseteq E$ of \emph{cohomologically proper} morphisms and $I\subseteq E$ of  \emph{cohomologically étale} morphisms satisfying almost all the conditions required for the Liu-Zheng construction, apart from the condition that all morphisms in $E$ can be factored into morphisms in $I$ and $P$. In particular, there are canonical isomorphisms $p_*\cong p_!$ for $p$ in $P$ and $j^*\cong j^!$ for $j$ in $J$, or equivalently, adjunctions $p^* \vdash p_!$ and $j_!\vdash j^*$. An important caveat is, that the classes $P$ and $I$ are inductively defined, and require the morphism to be \emph{$n$-truncated for some integer $ n\geq -2 $}.

This begs the following question:

\begin{conjecture}[Scholze]
	Assume every morphism $f$ in $ E $ is $n$-truncated for some $ n $ possibly depending on $ f $, and that there are classes $I\subseteq E$ and $P\subseteq E$ such that every morphism in $I$ factors as a morphism in $I$ followed by a morphism in $P$. Are the six-functor formalisms that the Liu-Zheng construction produces exactly those, where every morphism in $ P $ is cohomologically proper and every morphism in $I$ is cohomologically \'etale?
\end{conjecture}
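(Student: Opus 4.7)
The conjecture splits into two implications. The forward direction---that any six-functor formalism produced by the Liu-Zheng construction from data $(I,P)$ has every morphism in $P$ cohomologically proper and every morphism in $I$ cohomologically étale---should be essentially immediate from the construction: with $f_! := p_* \circ j_\natural$ on a factorization $f = p \circ j$, taking the identity factorization gives $p_! \cong p_*$ for $p \in P$ and $j_! \cong j_\natural$ for $j \in I$, and the base change and projection compatibilities are forced by those already in the Liu-Zheng input. I expect our alternative formulation of cohomological properness and étaleness to be tailored precisely to record this data, so that the verification is essentially by inspection.

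For the reverse direction, let $\D \colon \Corr(\C,E)^\otimes \to \cat$ be a six-functor formalism in which every $p \in P$ is cohomologically proper and every $j \in I$ is cohomologically étale. My plan is to extract from $\D$ the input for the Liu-Zheng construction---the restriction $\D^* \colon \C^{\op} \to \te{CAlg}(\cat)$, the adjunctions $p^* \dashv p_!$ for $p \in P$ afforded by $p_! \cong p_*$, the adjunctions $j_! \dashv j^*$ for $j \in I$ afforded by $j^! \cong j^*$, together with the associated base change and projection data---, apply the Liu-Zheng construction to these to produce a six-functor formalism $\D^{\te{LZ}} \colon \Corr(\C,E)^\otimes \to \cat$, and then invoke the universal property of the Liu-Zheng construction as the free six-functor formalism generated by such input to produce a comparison morphism $\D^{\te{LZ}} \to \D$ of lax symmetric monoidal functors.

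It then remains to verify that this comparison is an equivalence. Both functors agree on objects and on morphisms of the forms $Y \xleftarrow{f} X = X$ and $X = X \xrightarrow{g} Y$ for $g \in P \cup I$, by construction. For a general $g \in E$ with a factorization $g = p \circ j$, the Liu-Zheng value $g_! = p_* \circ j_\natural$ matches the intrinsic $\D$-value $g_! \cong p_! \circ j_! \cong p_* \circ j_\natural$, where the first isomorphism is the functoriality $(p \circ j)_! \cong p_! \circ j_!$ present in any six-functor formalism, and the second uses cohomological properness of $p$ and étaleness of $j$. A generation argument---showing that $\Corr(\C,E)^\otimes$ is generated, under composition and tensor, by pullbacks and by $!$-pushforwards along $P \cup I$---should then upgrade this pointwise matching to a genuine equivalence of lax symmetric monoidal functors.

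The principal difficulty will be making the whole argument coherent at the $(\infty,1)$-categorical level. Concretely, I foresee three technical hurdles: formulating and establishing the universal property of the Liu-Zheng construction sharply enough to define the comparison morphism together with its full lax symmetric monoidal structure; exploiting the $n$-truncation hypothesis, via the inductive definitions of cohomologically proper and étale morphisms, to show that the comparison is independent of the chosen $I/P$-factorization and assembles coherently across all morphisms in $E$; and carrying out the generation argument to propagate the pointwise agreement of $\D^{\te{LZ}}$ and $\D$ to a global equivalence. I expect the first of these---obtaining a clean $\infty$-categorical universal property for the Liu-Zheng machine compatible with the tensor structure---to be the main obstacle, since it is precisely the point at which our alternative formulation of cohomologically proper and étale morphisms must buy us something concrete beyond Scholze's original definition.
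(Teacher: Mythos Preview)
Your plan has a genuine gap at precisely the point you flag as the main obstacle: the ``universal property of the Liu--Zheng construction as the free six-functor formalism on the input data'' that you want to invoke to produce the comparison $\D^{\te{LZ}}\to\D$ is not available as an independent input---it is essentially the statement you are trying to prove. The Liu--Zheng machine is a specific recipe, not something defined by a universal property, and your pointwise matching of $g_!$'s via a chosen factorization $g=p\circ j$ only yields agreement on the level of homotopy categories. Promoting that to a coherent equivalence of lax symmetric monoidal functors out of $\Corr(\C,E)^\otimes$ is exactly the hard part, and the ``generation argument'' you sketch does not supply the higher coherences; without a universal property there is no mechanism to assemble them.

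The paper sidesteps this entirely by refusing to treat the Liu--Zheng construction as a black box. Instead it unwinds it as a composite of explicit equivalences of functor $\infty$-categories: Liu--Zheng's categorical equivalences $\delta_3^*\C(All^\op,I^\op,P^\op)\simeq\C^\op$ and $\delta_3^*\C(All^\op,I,P)\simeq\Corr(\C,E)$, interleaved with two ``passage to adjoints'' equivalences $PA_{\{2\}}$ and $PA_{\{3\}}^{-1}$ on the relevant categories of diagrams with adjoinable squares. The entire proof then reduces to checking that the subcategory $\textup{BCFun}(\C,E,I,P)$ of LZ-admissible inputs corresponds, under this chain, to the subcategory of Nagata six-functor formalisms. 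This is exactly where the paper's reformulation of cohomologically proper and \'etale in terms of Beck--Chevalley conditions earns its keep: those conditions say precisely that certain squares are vertically adjoinable, which is what the $PA$ equivalences track, so the correspondence is a direct verification (using the results of Section~\ref{sect:cohom_proper_etale}). No comparison map is ever built by hand; the equivalence of categories \emph{is} the universal property, and restriction along $\C^{\op}\hookrightarrow\Corr(\C,E)$ is then automatically its inverse.
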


In Definition \ref{defn:Pr_Et}, the properties ``cohomologically proper" resp. ``cohomologically \'etale" \cite[Definition 6.10, 6.12]{scholze_notes} are carefully reformulated in terms of Beck-Chevalley conditions. This is proven to be equivalent to Scholze's definition in Proposition \ref{Different_definitions}. Herein lies the key to our proof of the conjecture, as the Liu-Zheng construction likewise, after applying the exceptionally technical Theorem \cite[Theorem 5.4]{LZb}, reduces to passing to adjoints of certain Beck-Chevalley squares. 

We call a geometric set-up $(\C,E)$, where all morphisms in $E$ are $ n $-truncated for some possibly variably $n$, with classes of morphisms $I,P\subseteq E$ suitable for the Liu-Zheng construction a \textit{Nagata set-up}.  A six-functor formalisms, that satisfies the reformulated conditions suggested by Scholze's conjecture is called a \textit{Nagata six-functor formalism} (see Definition \ref{defn:nagata} and Corollary  \ref{cor:same_as_nagata}). In this formulation, Scholze's conjecture becomes:

\begin{theorem}[Theorem \ref{theorem:scholzes conjecture}]
	Let $ (\mathcal{C}, I, P, E) $ be a Nagata set-up. Restriction along $ \mathcal{C}^{\te{op}, \sqcup} \to \te{Corr}(\mathcal{C}, E)^\otimes$ induces an equivalence of categories between the $ \infty $-category of Nagata six-functor formalisms, and the subcategory of functors $ \mathcal{D}^*:\mathcal{C}^\te{op}\to \te{CAlg}(\te{Cat}_\infty) $ satisfying the conditions required by the Liu-Zheng construction of a six-functor formalism and maps $\alpha: \mathcal{D}\to \mathcal{D}' $ such that for every morphism $ f: Y\to X$ in $ P $ resp. in $ I $, the diagram
		\begin{center}
			\begin{tikzcd}
					\mathcal{D}(X)\arrow[d, "f^*"]\arrow[r, "\alpha_X"] & \mathcal{D}'(X)\arrow[d, "f^*"]\\
					\mathcal{D}(Y)\arrow[r, "\alpha_Y"] & \mathcal{D}'(Y)
			\end{tikzcd}
		\end{center}
	is vertically right resp. left adjoinable.
\end{theorem}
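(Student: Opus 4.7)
The strategy is to construct an inverse to the restriction functor using the Liu-Zheng construction, with Proposition~\ref{Different_definitions} as the bridge between the two formulations. That proposition reformulates ``cohomologically proper'' (for $p \in P$) and ``cohomologically étale'' (for $j \in I$) in terms of vertical right/left adjointability of the Beck-Chevalley squares attached to $p^*$ and $j^*$, together with the relevant projection-formula adjointabilities. This is precisely the input that Liu-Zheng requires, so the Nagata axioms on a six-functor formalism and the axioms feeding the Liu-Zheng machine become two presentations of the same structure.

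For essential surjectivity, I would take a functor $\mathcal{D}^*:\mathcal{C}^{\te{op}}\to \te{CAlg}(\cat)$ satisfying the Liu-Zheng conditions with respect to $(I,P)$, apply \cite[Proposition A.5.10]{mann_thesis} to obtain a lax symmetric monoidal $\mathcal{D}:\Corr(\mathcal{C},E)^\otimes\to \cat$, and verify that $\mathcal{D}$ is Nagata. The fact that restriction along $\mathcal{C}^{\te{op},\sqcup}\hookrightarrow\Corr(\mathcal{C},E)^\otimes$ recovers $\mathcal{D}^*$ holds essentially by construction. For every $p\in P$, the Liu-Zheng output equips $p^*$ with a right adjoint $p_*\cong p_!$ satisfying base change and projection formula, and Proposition~\ref{Different_definitions} then certifies that $p$ is cohomologically proper; a symmetric argument shows that every $j\in I$ is cohomologically étale.

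For fully faithfulness, let $\mathcal{D}, \mathcal{D}'$ be two Nagata six-functor formalisms. Any morphism $\alpha:\mathcal{D}\to\mathcal{D}'$ of lax symmetric monoidal functors on $\Corr(\mathcal{C},E)^\otimes$ commutes with $f_!$ for every $f\in E$, in particular with $p_!\cong p_*$ on $P$ and with the left adjoint $j_!$ on $I$, yielding the prescribed adjointability conditions for the restriction $\alpha^*:\mathcal{D}^*\to (\mathcal{D}')^*$. For the converse direction, one would feed $\alpha^*$ together with its adjointability data into \cite[Theorem 5.4]{LZb}: the vertical right adjointability on $P$ exhibits $\alpha^*$ as compatible with $\bar p_*$ for every $\bar p\in P$, the vertical left adjointability on $I$ makes it compatible with $j_\natural$ for every $j\in I$, and hence, via the factorization $f=\bar p\circ j$ built into a Nagata set-up, with the full $f_!$ for every $f\in E$. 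Coherence of the Liu-Zheng output makes this extension unique and natural in $\alpha^*$, so it genuinely promotes $\alpha^*$ to a morphism on $\Corr(\mathcal{C},E)^\otimes$.

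The technical heart and main obstacle is this final step: converting pointwise adjointability on the generating classes $I$ and $P$ into a genuine morphism in the $\infty$-category of lax symmetric monoidal functors out of $\Corr(\mathcal{C},E)^\otimes$, independently of the chosen factorization of any given $f\in E$. This is precisely where the partial-adjoint technology of \cite[Theorem 5.4]{LZb} is indispensable: the Nagata axioms on $(\mathcal{C},I,P,E)$ have been tailored so that its hypotheses are met, but correctly packaging the Beck-Chevalley adjointabilities into the $(\infty,2)$-categorical input of that theorem, and then reading off uniqueness from its output, is where the real work lies. Once this is in place, both essential surjectivity and fully faithfulness follow from the same application of Liu-Zheng's machinery.
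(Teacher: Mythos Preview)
Your overall strategy is sound, but it differs from the paper's proof in structure and omits one key ingredient. The paper does not split into essential surjectivity and full faithfulness. Instead it builds the equivalence as a single composite of four categorical equivalences
\[
(\psi^*)^{-1}\circ PA_{\{3\}}^{-1}\circ PA_{\{2\}}\circ \phi^* : \textup{BCFun}(\mathcal{C},E,I,P)\xrightarrow{\ \sim\ } 2FF(\mathcal{C},E,I,P),
\]
where $\phi^*$ and $(\psi^*)^{-1}$ come from the multisimplicial-nerve equivalences of \cite[Theorem~5.4, Theorem~4.27]{LZb}, and $PA_{\{2\}}$, $PA_{\{3\}}$ are the \emph{passage-to-adjoints} equivalences of \cite[Theorem~2.3.2]{6ff}. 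This composite is then shown to be inverse to restriction. Because each $PA$ is already an equivalence between functor \emph{categories} with prescribed adjointability conditions, objects and morphisms are handled simultaneously; there is no separate full-faithfulness argument.

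The gap in your sketch is precisely this passage-to-adjoints step. You invoke \cite[Theorem~5.4]{LZb} as the device that promotes $\alpha^*$ to a transformation on $\Corr(\mathcal{C},E)^\otimes$, but that theorem only produces categorical equivalences between multisimplicial nerves; it has no mechanism for taking a natural transformation plus adjointability data and outputting an extension. What actually does this job is \cite[Theorem~2.3.2]{6ff}: it identifies, as $\infty$-categories, functors into $\Cat_\infty$ satisfying a right-adjointability condition with those satisfying the mirrored left-adjointability condition. Applying it in directions $\{2\}$ and $\{3\}$ is what converts the $f^*$-data into coherent $f_!$-data for $f\in P$ and $f\in I$ respectively, and it does so functorially in $\alpha$. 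Without naming this, your full-faithfulness paragraph is only a heuristic.

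Two smaller points. First, the paper does not use Proposition~\ref{Different_definitions} to verify that the Liu--Zheng output is Nagata; rather, the construction via $PA_{\{2\}}$ and $PA_{\{3\}}$ directly exhibits that morphisms in $P$ satisfy (Pr-I) and morphisms in $I$ satisfy (\'Et-I). Second, for the reverse inclusion $2FF\subseteq\textup{image}$, the paper needs Corollary~\ref{cor:more_adjoinable_squares} and Proposition~\ref{prop:natural_transformation} to check that Nagata two-functor formalisms and their morphisms land in the correct adjointable subcategory before applying $PA$ in the other direction; these are the analogues of your ``$\alpha$ commutes with $p_!$ and $j_!$'' observation, but made precise at the level needed.
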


\subsection{Other formalisations of six-functor formalisms}

This paper is somewhat orthogonal to the $(\infty, 2)$-categorical approach to six-functor formalisms. In \cite{gaitsgory_rozenblyum}, six-functor formalisms are defined in terms of functors from the symmetric monoidal $ (\infty, 2) $-category of correspondences $ \te{Corr}(\mathcal{C})_{\te{all}, \te{all}}^\te{proper} $, which also explicitly encode the adjunction $ f^*\dashv f_! $ for $ f $ in $ P=\{\te{proper maps}\} $ and an isomorphism $ f^*f_!\simeq \te{id} $, which is assumed to be the counit of adjunction $ f_!\dashv f^*$ for $ I=\{\te{open immersions}\} $. 

While it is important for their construction that maps in $ I $ are $ -1 $-truncated, proper morphisms of derived schemes are only $ n $-truncated for any $n$, if they are finite. Still, their construction of a six-functor formalism in their sense is via a universal property of $ \te{Corr}(\mathcal{C})_{\te{all}, \te{all}}^\te{proper} $ so, in particular, automatically unique. 

Even if only all maps in $I\cap P$ are $n$ truncated for some $n$ possibly depending on the map, there is an analogously defined symmetric monoidal $(\infty, 2)$-category $\te{Corr}(\mathcal{C})_{\te{all}, E}^{P, I}$. In \cite{cnossen} it is shown that lax symmetric monoidal functors $\te{Corr}(\mathcal{C})_{\te{all}, E}^{P, I}\to \te{Cat}_\infty$ are equivalent to functors $\mathcal{C}^\te{op}\to \te{CAlg}(\te{Cat}_\infty)$ satisfying the same conditions as the Liu-Zheng construction. It is believed, that this recovers the Liu-Zheng construction in general. Under the assumption, that every morphism in $E$ is $n$-truncated for some $n$ possibly depending on the morphism, this is a consequence of the main theorem of this paper.

Such general uniqueness results are possible in the $(\infty, 2)$-categorical setting as they do not have to reconstruct the adjunctions $ f^*\dashv f_! $ for $f\in P$ inductively as we do, but have them encoded as additional data yielding better rigidity. This excludes twists as defined in Section \ref{sect:K-theory} as for twisted six-functor formalisms no coherent morphisms $ f_{!'}\to f_* $ can exist.\footnote{This would amount to a coherent system of maps $ 1_X\to \mathcal{L}_{X/Y} $ trivializing the twists $ \mathcal{L}_{X/Y} $ and all their coherence isomorphisms.}

Our approach can, however, be directly compared to  \cite{ayoub_i} and \cite{ayoub_ii}, \cite{Cisinski_deglise}, \cite{drew_gallauer}, and \cite{khan}, who additionally assume smooth base-change formulas to hold. A precise translation to the framework of coefficient systems is provided in Section \ref{subsect:coeff}, leading to the characterization of $\mathcal{SH}$ as initial object of a well-chosen subcategory of the $\infty$-category of Nagata six-functor formalisms. In Section \ref{subsect:khan} we spell out how a proof of \cite[Theorem 2.51]{khan}, independent of $(\infty,2)$-categories, can be obtained.


\subsection{Relation to $K$-theory}
    In the  proving uniqueness of the construction it is indispensable to assume that every morphism in $E$ is $n$-truncated for some $n$, possibly depending on $f$. This allows for certain inductive arguments.
    
    In Section \ref{sect:K-theory} we show why this condition is necessary. Example \ref{Euler-characteristics} constructs a six-functor formalism on the opposite category of finite anima that satisfies all other assumptions of Theorem \ref{theorem:scholzes conjecture} but cannot be reconstructed.

    The obstruction is that $|\te{Corr}(\mathcal{C})|$ is no longer contractible. This allows us to construct a class of six-functor formalisms of the form
        \begin{align*}
            \mathcal{D}_\mathcal{L}: \te{Corr}(\mathcal{C})^\otimes\to |\te{Corr}(\mathcal{C})|^\otimes \to \te{Cat}_\infty.
        \end{align*}
    	 The Liu-Zheng construction applied to $\mathcal{D}_\mathcal{L}^*: \mathcal{C}^{\te{op}, \sqcup}\to \te{Cat}_\infty$ and any sets of morphisms $I$ and $P$ on the other hand would always reconstruct the constant six-functor formalism $X\mapsto \mathcal{D}(1)$. 
    
    
    In Definition \ref{def:twist}, we propose a general notion of twists for any three-functor formalism $ \mathcal{D}:\te{Corr}(\mathcal{C})^\otimes\to \te{Cat}_\infty $\footnote{In Remark \ref{def:generaltwists} this notion is extended to all three-functor formalisms $ \mathcal{D}: \te{Corr}(\mathcal{C}, E)^\otimes\to \te{Cat}_\infty $.}; they are given by strong symmetric monoidal functors
    	\begin{align*}
    		\mathcal{L}_{X/-}: T_\mathcal{C}(X):=\Omega |\te{Corr}(\mathcal{C}_{X/})^\otimes|\to \mathcal{D}(X)^\otimes
    	\end{align*}
	functorially varying over $ X\in \mathcal{C}^{\te{op}, \sqcup} $. Remark \ref{explanation:twists} explains how to extract a $ \otimes $-invertible object $ \mathcal{L}_{X/Y}\in \mathcal{D}(X) $ for any morphism $ f: X\to Y $ and all higher coherence data such that 
		\begin{align*}
			f_{!'}(-)= f_!(-\otimes \mathcal{L}_{X/Y})
		\end{align*}
	\emph{should} yield a three-functor formalism $ \mathcal{D}_\mathcal{L}: \te{Corr}(\mathcal{C})^\otimes\to \te{Cat}_\infty $ which agrees with $ \mathcal{D} $ on $ \mathcal{C}^{\te{op}, \sqcup} $ making it inaccessible to the Liu-Zheng construction.
	We \emph{warn} the reader that we don't provide a formal construction of this twisted three-functor formalism in this article in this generality.

    Taking these geometric realisations has striking similarities with the Quillen Q construction of connective $K$-theory. Using Lemma \ref{lem:QuillenQ}, one can even directly compare the two:
        \begin{align*}
            T_\mathcal{C}(X)=K(\te{Stab}(\mathcal{C}_{X/,/X})).
        \end{align*}
    Using this description, we may define twists of six-functor formalisms for all geometric set-ups (see Remark \ref{def:generaltwists}).
    For six-functor formalisms on the $\infty$-category of affine $ \mathbb{E}_\infty $-schemes over some base, where $E$ is the class of morphisms of finite presentation, this allows us to calculate
        \begin{align*}
            T_{(\mathcal{C}, E)}(\te{Spec}(R))=K(R),
        \end{align*}
    see Proposition \ref{prop:twists on affine schemes}.
    Spelled out, twists of six-functor formalisms on $ \mathbb{E}_\infty $-schemes are given by coherent families of representations of the connective algebraic $K$-theory spectra. 
    
    One example is fashioned by the determinant map $ \det: K(R)\to \te{Pic}(\te{Spec}(R)) $. Given $ \mathcal{D} $ a six-functor formalism of ind-coherent sheaves, solid quasi-coherent sheaves or the like, we can embed $ \te{Pic}(\te{Spec}(R)) $ into $ \mathcal{D}^*(\te{Spec}(R)) $. The resulting six-functor formalism twisted by the composition of these maps should have exceptional pushforwards
    	\begin{align*}
    		f_{!'}(-)=f_!(-\otimes \det (\mathbb{L}_f)),
    	\end{align*}
	where $ \mathbb{L}_f $ is the relative cotangent complex (see Example \ref{great_example}).
    
    \subsection{Outlook and conjectures}
    
    	Pertaining to cases not covered by Scholze's conjecture, we present a conjecture characterising the six-functor formalisms $ \mathcal{D}':\te{Corr}(\mathcal{C})^\otimes\to \te{Cat}_\infty $, that come from twisting $ \mathcal{D}:\te{Corr}(\mathcal{C})^\otimes\to \te{Cat}_\infty $. In particular, this conjecture asks for a construction of the twisted three-functor formalisms as sketched in Remark \ref{explanation:twists}.
    	
    	\begin{conjecture}[Conjecture \ref{conjecture:LZ}]
    		Let $\mathcal{D}: \te{Corr}(\mathcal{C})^\otimes\to \te{Cat}_\infty$ a three-functor formalism. Denote by $\mathfrak{LZ}_\mathcal{D}^\otimes$ its symmetric monoidal $(\infty, 2)$-category of cohomological correspondences with underlying symmetric monoidal $\infty$-category $\te{LZ}_\mathcal{D}^\otimes$. There is a canonical isomorphism of anima
    		\begin{align*}
    			\te{Hom}_{\te{CAlg}(\te{Cat}_{\infty})_{ \mathcal{C}^{\te{op}, \sqcup}/}}(\te{Corr}(\mathcal{C})^\otimes, \textnormal{LZ}_\mathcal{D}^\otimes)\simeq
    			\te{Hom}_{\te{Fun}(\mathcal{C}^\te{op}, \te{CAlg}(\te{Cat}_\infty))}(T_\mathcal{C}, \mathcal{D}^*).
    		\end{align*}
    	\end{conjecture}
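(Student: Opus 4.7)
The strategy is to interpret both sides of the claimed equivalence as parametrising the same twist data for the three-functor formalism $\mathcal{D}$. The left-hand side classifies lifts of the canonical section $\mathcal{C}^{\te{op},\sqcup}\to \te{LZ}_\mathcal{D}^\otimes$ to a symmetric monoidal functor out of the whole correspondence category; by construction of $\mathfrak{LZ}_\mathcal{D}^\otimes$ as an $(\infty,2)$-category of cohomological correspondences, this is exactly the data needed to build a twisted three-functor formalism $\mathcal{D}_\mathcal{L}$ with the prescribed restriction functors. The right-hand side classifies natural symmetric monoidal functors $T_\mathcal{C}(X)\to \mathcal{D}(X)$, which by Remark \ref{explanation:twists} amount to coherent families of $\otimes$-invertible objects $\mathcal{L}_{X/Y}\in \mathcal{D}(X)$ together with all their higher coherence.

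The plan is to first construct the comparison map explicitly, then verify it is an equivalence by a fibrewise analysis. For the construction: given $\alpha: T_\mathcal{C}\to \mathcal{D}^*$, I would produce a symmetric monoidal functor $\te{Corr}(\mathcal{C})^\otimes\to \te{LZ}_\mathcal{D}^\otimes$ by sending a forward arrow $f:Y\to X$ to the pair of its image under $\mathcal{D}$ together with the invertible object $\mathcal{L}_{Y/X}\in \mathcal{D}(Y)$ extracted from $\alpha$. Higher coherence comes directly from the defining identity $T_\mathcal{C}(X)=\Omega|\te{Corr}(\mathcal{C}_{X/})^\otimes|$, which already packages the higher correspondence data over the slice category; the inverse-image direction is prescribed by the under-category structure so the functor automatically lies in $\te{LZ}_\mathcal{D}^\otimes$ relative to $\mathcal{C}^{\te{op},\sqcup}$.

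To show the comparison map is an equivalence, the strategy is to reduce to a pointwise statement. The forgetful map $\te{LZ}_\mathcal{D}^\otimes\to\mathcal{C}^{\te{op},\sqcup}$ should be a cocartesian fibration whose fibre over $X$ controls the possible twists of $f_!$ at $X$, and the universal property of the correspondence category says that extending a functor out of $\mathcal{C}^{\te{op},\sqcup}$ to one out of $\te{Corr}(\mathcal{C})^\otimes$ amounts to coherently choosing left adjoints to $f^*$ along forward arrows. Relative to the canonical choice coming from $\mathcal{D}$ itself, the space of alternative choices at $X$ should be equivalent to $\Omega|\te{Corr}(\mathcal{C}_{X/})^\otimes|$, since this loop space classifies multiplicative self-equivalences of the identity three-functor formalism on $\mathcal{C}_{X/}$. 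Assembling these fibrewise identifications functorially in $X$ recovers the natural transformation $T_\mathcal{C}\to \mathcal{D}^*$.

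The principal obstacle is making the fibrewise identification rigorous: one needs a precise description of $\mathfrak{LZ}_\mathcal{D}^\otimes$ as an $(\infty,2)$-category with sufficient control on its symmetric monoidal structure, and a careful argument that relative twists of $f_!$ are governed by group-completion data. The Quillen Q description $T_\mathcal{C}(X)=K(\te{Stab}(\mathcal{C}_{X/,/X}))$ from Lemma \ref{lem:QuillenQ} should be essential here, embedding the coherence problem into additive $K$-theory where the universal property of connective $K$-theory as group completion controls the relevant delooping. Completing this program would simultaneously yield the promised construction of the twisted three-functor formalism $\mathcal{D}_\mathcal{L}$ sketched in Remark \ref{explanation:twists}.
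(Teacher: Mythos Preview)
The statement you are attempting to prove is stated in the paper as an open \emph{conjecture} (Conjecture~\ref{conjecture:LZ} in Section~\ref{sect:outlook}), not as a theorem; the paper provides no proof. There is therefore nothing to compare your attempt against. The paper does give heuristic motivation in the Remark immediately preceding the conjecture, and your proposal essentially recapitulates and slightly expands that heuristic: produce twisted $f_{!'}$'s from invertible objects $\mathcal{L}_{X/Y}$, interpret the choice of self-duality data on $X$ in $\mathfrak{LZ}_\mathcal{D}$ as a twist, and read off the coherence from the slice-category description of $T_\mathcal{C}$.

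Your proposal is a strategy sketch, not a proof, and you correctly identify the gap yourself: the fibrewise identification requires a rigorous model of $\mathfrak{LZ}_\mathcal{D}^\otimes$ with enough control on its $(\infty,2)$-categorical and symmetric monoidal structure, together with a genuine argument that the space of relative twists of $f_!$ is a torsor for $T_\mathcal{C}(X)$. Neither the paper nor your proposal supplies this. In particular, the assertion that ``the space of alternative choices at $X$ should be equivalent to $\Omega|\te{Corr}(\mathcal{C}_{X/})^\otimes|$'' is precisely the content of the conjecture at a single object, so invoking it is circular. The paper explicitly flags that resolving this would also construct the twisted three-functor formalism $\mathcal{D}_\mathcal{L}$ of Remark~\ref{explanation:twists}, which is likewise left open. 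Until those ingredients are supplied, what you have written is a plausible outline consistent with the paper's own informal discussion, but not a proof.
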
 
    
    	The Liu-Zheng construction and, in particular, theorem \cite[Theorem 5.4]{LZb} on which this article depends heavily has not appeared in the peer-reviewed literature as of 2025. Conjecture \ref{conjecture:LZ} is entirely independent and would imply that any construction of this flavour is necessarily unique provided the assumption on truncatedness of morphisms in $ E $ holds.

\subsection{Notation}
    We recall some notation used in \cite{LZa} and \cite{LZb}.
    \begin{notation}[{\cite[Definition 6.1.1, Lemma 6.1.2]{LZa}}]
        For $n\geq 0$, we denote by $C(\Delta^n)$ the full subcategory of $\Delta^n\times (\Delta^n)^\op$ on objects $(i,j)$ such that $i\leq j$. 

        Let $\mathcal{C}$ be an $\infty$-category with finite limits, and $E$ a set of homotopy classes of morphisms in $\C$, closed under composition and pullbacks by arbitrary morphisms. Then we denote by $\Corr(\C,E)$ the quasi-category whose $n$-simplices are given by functors 
        $$C(\Delta^n)\to \C $$
        that send maps $(i,j) \to (i+1, j)$, for $i$ in $\Delta^n$ and $j$ in $(\Delta^n)^\op$, to morphisms in $E$, and that send squares of the form
        \begin{center} \begin{tikzcd} (i,j) \ar[r] & (i+1,j) \\
        (i,j+1) \ar[u] \ar[r] & (i+1,j+1)\ar[u]
        \end{tikzcd}
        \end{center}for $i+1\leq j$ to pullback squares. In the case that $E$ is the set $All$ of all morphisms in $\C$, we denote $\Corr(\C,E)$ by $\Corr(\C)$.
    \end{notation}
    In other words, an edge in $\Corr(\C,E)$ can be thought of as a span 
    $$X \xleftarrow{f} Z \xrightarrow{g} Y$$
   in $\C$, where $g$ is in $E$. The composition of to spans $X \leftarrow Z \rightarrow Y$ and $Y \leftarrow V \rightarrow W$ is given by the composite $X \leftarrow Z \leftarrow Z\times_Y V \rightarrow V \rightarrow W$.\\

\begin{notation}[{\cite[Definition 3.16]{LZb}}]\label{defn:multisimplicial_nerve} 
Let $\C$ an $\infty$-category with finite limits and $I_1,\dots, I_k$ sets of morphisms in $\C$ or $\C^\op$. Then we denote by $\C(I_1,\dots,I_k)$ the $k$-simplicial set whose $(n_1,\dots, n_k)$-simplices are $n_1$-by-$n_2$-by-$\dots$-by-$n_k$ $k$-dimensional hypercubes in $I_1\times \dots \times I_k$, where all squares are cartesian squares in $\C$.

We denote by $\delta_k^* \C(I_1,\dots,I_k)$ its diagonal, in other words, the simplicial set whose $n$-simplices are $(n,\dots,n)$-simplices in $\C(I_1,\dots,I_k)$.

 We use $\C^\textup{cart}$ as a shorthand for the bisimplicial set $\C(All,All)$.
\end{notation}

Consider a square of categories
\begin{center}\begin{tikzcd}
    \mathcal{A} \ar[r, "f"] \ar[d, "g"] & \mathcal{B} \ar[d, "j"] \\
    \mathcal{C} \ar[r, "h"] & \mathcal{D}
\end{tikzcd} \end{center} that commutes up to a specified isomorphism $ jf \cong hg$.
Given right adjoints $g^R$ and $j^R$ of $g$ and $j$ respectively, the co-unit and unit of these adjunctions induce a canonical morphism 
$$fg^R \to j^Rjfg^R \cong j^Rhgg^R \to j^R g.$$
Similarly, given left adjoints $g^L$ and $j^L$ of $g$ and $j$  respectively, there is a canonical morphism
$$j^Lh\to j^Lhgg^L \cong j^Ljfg^L \to fg^L.$$
   \begin{definition}
       We say that a square of categories 
   \begin{center}\begin{tikzcd}
    \mathcal{A} \ar[r, "f"] \ar[d, "g"] & \mathcal{B} \ar[d, "j"] \\
    \mathcal{C} \ar[r, "h"] & \mathcal{D}
\end{tikzcd} \end{center}
is \textit{vertically right adjoinable} if $g$ and $j$ have right adjoints $g^R$ and $j^R$ respectively, and if the canonical morphism  $fg^R\to j^R g$ is an isomorphism. We say that such a square is \textit{vertically left adjoinable} if $g$ and $j$ have left adjoints $g^L$ and $j^L$ respectively, and if the canonical morphism $j^Lh\to fg^L$ is an isomorphism. We define \textit{horizontally} left and right adjointable squares analogously.

We say that a square of $\infty$-categories is vertically/horizontally left/right adjoinable if the square of homotopy categories is.
   \end{definition}

\subsection{Acknowledgements}

    We would like to thank Ko Aoki and Peter Scholze for useful discussions. We thank Martin Gallauer, Gijs Heuts, and Adeel Khan for comments on a previous version of this manuscript.  
    The first named author acknowledges funding from the DFG Leibniz Preis through Peter Scholze. The second named author was partially supported by the ERC grant ERC-2017-STG 759082 through Dan Petersen.

 \section{Cohomologically proper and étale morphisms}\label{sect:cohom_proper_etale}
		We recall some notions from \cite{mann_thesis}, \cite{LZa} and \cite{LZb}. 
		
		\begin{definition}\label{defn:nagata_setup}
			A \emph{geometric set-up} $ (\mathcal{C}, E) $ is a small $ \infty $-category together with a set of homotopy classes of morphisms $ E $, such that
				\begin{enumerate}
					\item pullbacks of morphisms in $E$ along arbitrary morphisms exist, and are in $E$;
					\item $ E $ is stable under composition;
					\item $ E $ contains all isomorphisms.
				\end{enumerate}
			A \emph{Nagata set-up} $ (\mathcal{C}, E, I, P) $ is a geometric set-up $ (\mathcal{C}, E) $ together with two subsets $ I, P\subseteq E $, such that
				\begin{enumerate}
                    \item any morphism $ f\in E $ is a composite $ \bar{f}\circ j $ for $ j\in I $ and $ \bar{f}\in P $;
					\item $ (\mathcal{C}, I) $ and $ (\mathcal{C}, P) $ are geometric set-ups;
					\item given $ f: X\to Y $ in $ \mathcal{C} $ and $ g: Y\to Z $ in $ I $, we have $ f \in I $ if and only if $ g\circ f \in I $;
					\item given $ f: X\to Y $ in $ \mathcal{C} $ and $ g: Y\to Z $ in $ P $, we have $ f \in I $ if and only if $ g\circ f \in P $;
     \item for any morphism $f\in I\cup P$, there exists some $n\geq -2$ such that $f$ is $n$-truncated.
     
     \end{enumerate}
			
		\end{definition}
	\begin{notation}
	    Given an $\infty$-category $\C$, we let $\C^\sqcup$ denote the $\infty$-operad of \cite[Construction 2.4.3.1]{HA}.
	\end{notation}
	We observe that an edge $f:(X_i)_I\to (Y_j)_J$ in $((\C^\op)^\sqcup)^\op$, henceforth denoted $\C^{\op,\sqcup,\op}$, consists of a map $\alpha:J \to I$ in $\fin_*$, and for every $j\in J$ with $\alpha(j)=i$ a map $X_i\to Y_j$. 
\begin{notation}
  Given a set of edges $E$ in $\C$, we denote by $E_\sqcup$ the set of edges in $\C^{\op,\sqcup,\op}$ whose components $X_i\to Y_j$ are all in $E$. We denote by $E_-\subseteq E_\sqcup$ the subset of edges that lie over an identity morphism in $\fin_*$. 
\end{notation}
Given a geometric set-up $(\C,E)$, it follows that $(\C^{\op,\sqcup,\op}, E_\minus)$ is a geometric set-up. Likewise, given a Nagata set-up $ (\mathcal{C}, E, I, P ) $, it follows that $(\C^{\op,\sqcup,\op},E_-,I_-,P_-)$ is a Nagata set-up.

For $(\C,E)$ a geometric set-up, by \cite[Proposition 6.1.3]{LZa} we have that $\te{Corr}(\C^{\op,\sqcup,\op}, E_\minus) $ is a symmetric monoidal $\infty$-category whose underlying $\infty$-category is $\Corr(\C,E)$. Henceforth, we denote $\te{Corr}(\C^{\op,\sqcup,\op}, E_\minus) $ by $\Corr(\C,E)^\otimes$.

\begin{definition} Let $ (\mathcal{C}, E) $ be a geometric set-up.
			\begin{enumerate}
					\item A \emph{two-functor formalism} is a functor $ 	\mathcal{D}: \te{Corr}(\mathcal{C}, E)\to \te{Cat}_\infty $. 
					\item A \emph{three-functor formalism} is a lax cartesian structure $\mathcal{D}: \te{Corr}(\mathcal{C}, E)^\otimes\to \te{Cat}_\infty. $ (see \cite[Definition 2.4.1.1.]{HA})
					\item A \emph{six-functor formalism} is a three-functor formalism
					$ 	\mathcal{D}: \te{Corr}(\mathcal{C}, E)^\otimes\to \te{Cat}_\infty $ taking values in the $\infty$-category $ \te{Cat}_\infty^\te{L} $ of $\infty$-categories and functors that are left-adjoints.
			\end{enumerate}
			For $\D$ a two-functor formalism, we denote the image of $ (f: X\to Y )\in E $ under $ \te{Corr}(\mathcal{C}, E)$ $ \to \te{Cat}_\infty  $ by $ f_!: \mathcal{D}(X)\to \mathcal{D}(Y) $ and the image of $ (f: X\to Y)\in \mathcal{C} $ under $ \mathcal{C}^{\te{op}}\to\te{Corr}(\mathcal{C}, E)\to  \te{Cat}_\infty $ by $ f^*: \mathcal{D}(Y)\to\mathcal{D}(X) $. For $\D$ a three-functor formalism, the lax cartesian structure $ \mathcal{C}^{\te{op}, \sqcup} \to \te{Corr}(\mathcal{C}, E)^\otimes\to \te{Cat}_\infty $ corresponds to a functor $ \mathcal{C}^\te{op}\to \te{CAlg}(\te{Cat}_\infty) $ via \cite[Theorem 2.4.3.18.]{HA}. We denote the corresponding symmetric monoidal structure on $ \mathcal{D}(X) $ by $ \otimes $ with unit $ 1_X $. 
		\end{definition}

		\begin{remark}
			We will not need to treat three- and six-functor formalisms separately. Using
			 $ \te{Corr}(\mathcal{C}, E)^\otimes=\te{Corr}(\C^{\op,\sqcup,\op}, E_\minus) $, we view them as a subclass of two-functor formalisms for the geometric set-up $ (\C^{\op,\sqcup,\op}, E_\minus)  $.
		\end{remark}
	In \cite{scholze_notes} the notions of  ``cohomologically proper" and ``cohomologically \'etale'' are defined for a three-functor formalism. We will generalize these notions to a two-functor formalism.
\begin{definition}\label{defn:Pr_Et}
    Given a two-functor formalism $\D:\te{Corr}(\C,E) \to \cat$, we say that a morphism $f:X\to Y$ in $E$ satisfies
\begin{itemize}
 \item[(Pr-I)] if $f$ is $n$-truncated, and
\begin{itemize}
    \item[(1)] either $n=-2$, in which case $f_!$ has a left-adjoint $f^\natural$; in which case the square
        \begin{center}
            \begin{tikzcd}
                \mathcal{D}(X)\ar[d, "f_!"] \ar[r, "="]& \mathcal{D}(X) \ar[d, "="] \\
                \mathcal{D}(Y) \ar[r, "f^*"] &\mathcal{D}(X)
            \end{tikzcd}
        \end{center}
        is automatically vertically left-adjoinable, giving a natural isomorphism
$$\iota_{-2}:f^*\to f^* f_!f^\natural \cong f^\natural,$$ 
        \item[(2)] or $n\geq -1$ and $\Delta_f$ satisfies (Pr-I), with a natural isomorphism $\iota_{n-1}:\Delta_{f}^*\xrightarrow{\cong} \Delta_{f}^\natural$, and the square
\begin{center}
    \begin{tikzcd}
        \mathcal{D}(X)\arrow[d, "f_!"] \arrow[r, "p_2^*"] & \mathcal{D}(X \times_Y X) \arrow[d, " p_{1,!}"] \\
        \mathcal{D}(Y)\arrow[r, "f^*"] & \mathcal{D}(X)
    \end{tikzcd}
\end{center}
is vertically left-adjoinable, giving a natural isomorphism 
$$f^* \cong \Delta_f^\natural p_1^\natural f^* \xrightarrow[BC]{\cong} \Delta_f^\natural p_2^* f^\natural \xrightarrow[\iota_{n-1}^{-1}]{\cong}\Delta_f^* p_2^* f^\natural \cong f^\natural.$$ 
\end{itemize} 
\item[(Pr-II)] if $f$ is $n$-truncated, and
\begin{itemize}
    \item[(1)] either $n=-2$, in which case $f^*$ has a right-adjoint $f_*$; in which case the square \begin{center}
        \begin{tikzcd}
            \D(X) \arrow[r, "f_!"] \arrow[d, "=" ] & \D(Y) \arrow[d, "f^*"] \\
            \D(X) \arrow[r, "="] & \D(X)
        \end{tikzcd}
    \end{center} 
    is automatically vertically right-adjoinable, giving a natural isomorphism 
    $$\epsilon_{-1}:f_! \to f_*f^*f_! \cong f_*,$$ 
    \item[(2)] or $n\geq 1$ and $\Delta_f$ satisfies (Pr-II), with a natural isomorphism $\epsilon_{n-1}:\Delta_{f,!}\xrightarrow{\cong} \Delta_{f,*}$, and the square
    \begin{center}
        \begin{tikzcd}
            \D(X) \arrow[r, "f_!"] \arrow[d, "p_2^*"] & \D(Y) \arrow[d, "f^*"]\\
            \D(X \times_Y X)\arrow[r, "p_{1,!}"] & \D(X)
        \end{tikzcd}
    \end{center}
    is vertically right-adjoinable, giving a natural isomorphism
    $$\epsilon_n:f_! \cong f_!p_{2,*}\Delta_{f,*} \xrightarrow[BC]{\cong} f_*p_{1,!}\Delta_{f,*} \xrightarrow[\epsilon_{n-1}^{-1}]{\cong} f_* p_{1,!} \Delta_{f,!} \cong f_*.$$ 
\end{itemize}
\end{itemize}
We define conditions (Ét-I) and (Ét-II) dually; a morphism $f:X\to Y$ in $E$ satisfies
\begin{itemize}
    \item[(Ét-I)] if $f$ is $n$-truncated, and 
    \begin{itemize}
        \item[(1)] either $n=-2$, in which case $f_!$ has a right-adjoint $f^!$; in which case the square   \begin{center}
            \begin{tikzcd}
                \mathcal{D}(X)\ar[d, "f_!"] \ar[r, "="]& \mathcal{D}(X) \ar[d, "="] \\
                \mathcal{D}(Y) \ar[r, "f^*"] &\mathcal{D}(X)
            \end{tikzcd}
        \end{center}
        is automatically vertically right-adjoinable, giving a natural isomorphism
        $$\iota_{-2}:f^! \cong f^*f_! f^! \to f^*, $$
        \item[(2)] or $n\geq -1$ and $\Delta_f$ satisfies (Ét-I), with a natural isomorphism $\iota_{n-1}:\Delta_{f}^!\xrightarrow{\cong} \Delta_{f}^*$, and the square
\begin{center}
    \begin{tikzcd}
        \mathcal{D}(X)\arrow[d, "f_!"] \arrow[r, "p_2^*"] & \mathcal{D}(X \times_Y X) \arrow[d, " p_{1,!}"] \\
        \mathcal{D}(Y)\arrow[r, "f^*"] & \mathcal{D}(X).
    \end{tikzcd}
\end{center}
is vertically right-adjoinable, giving a natural isomorphism 
$$\iota_n:f^!\cong \Delta_f^*p_2^*f^!\xrightarrow[BC]{\cong} \Delta^*_f p_1^!f^*\xrightarrow[\iota_{n-1}^{-1}]{\cong} \Delta^!_fp_1^!f^* \cong f^*.$$
\end{itemize} 
\item[(Ét-II)] if $f$ is $n$-truncated, and
\begin{itemize}
    \item[(1)] either $n=-2$, in which case $f^*$ has a left-adjoint $f_\natural$; in which case the square \begin{center}
        \begin{tikzcd}
            \D(X) \arrow[r, "f_!"] \arrow[d, "=" ] & \D(Y) \arrow[d, "f^*"] \\
            \D(X) \arrow[r, "="] & \D(X)
        \end{tikzcd}
    \end{center} 
    is automatically vertically left-adjoinable, giving a natural isomorphism
$$\epsilon_{-2}:f_\natural \cong  f_\natural f^*f_! \to f_!,$$ 
    \item[(2)] or $n\geq 1$ and $\Delta_f$ satisfies (Ét-II), with a natural isomorphism $\epsilon_{n-1}:\Delta_{f,\natural}\xrightarrow{\sim} \Delta_{f,!}$, and the square
    \begin{center}
        \begin{tikzcd}
            \D(X) \arrow[r, "f_!"] \arrow[d, "p_2^*"] & \D(Y) \arrow[d, "f^*"]\\
            \D(X \times_Y X)\arrow[r, "p_{1,!}"] & \D(X)
        \end{tikzcd}
    \end{center}
    is vertically left-adjoinable, giving a natural isomorphism
    $$\epsilon_n:f_\natural \cong f_\natural p_{1,!}\Delta_{f,!} \xrightarrow[BC]{\cong} f_!p_{1,\natural}\Delta_{f,!}\xrightarrow[\epsilon_{n-1}^{-1}]{\cong} f_! p_{1,\natural} \Delta_{f,\natural} \cong f_!  .$$ 
\end{itemize}
\end{itemize}  
\end{definition}
We say that a morphism $f$ satisfies one of these conditions \textit{stably} if the base change of $f$ along any morphism satisfies the condition as well; and the same is true for its diagonal.\footnote{This should be understood as an inductive definition like before.}

\begin{remark}
	Unlike in the case of three-functor formalisms and cohomologically proper morphisms, in general two-functor formalisms, morphisms satisfying $ (\te{Pr-II}) $ or $ (\te{Pr-II}) $ do not necessarily satisfy it stably.
	
	An explicit counter example can be constructed on $\mathcal{C}=\Delta^1\times\Delta^1$. Denote the objects and morphisms in this category by 
	\begin{center}
		\begin{tikzcd}
			X' \ar[r, "\bar{g}"]\ar[d, "\bar{f}"]  & X\ar[d, "f"]\\
			Y'\ar[r, "g"] & Y.
		\end{tikzcd}
	\end{center}
	Set $E=\{ \te{id}_X, \te{id}_{X'}, \te{id}_Y, \te{id}_{Y'}, f, \bar{f} \}$ . The data of a $2$-functor formalism $\te{Corr}(\mathcal{C}, E)\to \te{Cat}_\infty$ with $\mathcal{D}(X)=\mathcal{D}(Y)=\{*\}$ is equivalent a to pointed $\infty$-category $\mathcal{D}(Y')$ with a pointed full sub-$\infty$-category $\bar{f}_!: \mathcal{D}(X')\hookrightarrow \mathcal{D}(Y')$ admitting a retraction $\bar{f}^*$. Then $f$ is cohomologically proper, but its base change $\bar{f}$ need not be. An explicit counterexample is given by $\mathcal{D}(Y')=\Delta^2$ pointed with $0$, where $\mathcal{D}(X')$ is the full subcategory on $\{0, 2\}$ and $\bar{f}^*$ sends $1$ to $2$.
\end{remark}

\begin{lemma}\label{lem:stably}
    Let $\D:\te{Corr}(\C,E) \to \cat$ be a two-functor formalism, and let $f:X\to Y$ be in $E$.
    \begin{itemize}
        \item[(1)] If $f$ satisfies (Pr-I) stably, 
        then $f$ satisfies (Pr-II);
        \item[(2)] If $f$ satisfies (Pr-II) stably, 
        then $f$ satisfies (Pr-I);
         \item[(3)] If $f$ satisfies (Ét-I) stably, 
         then $f$ satisfies (Ét-II);
        \item[(4)] If $f$ satisfies (Ét-II) stably, 
         then $f$ satisfies (É-I).
    \end{itemize}
\end{lemma}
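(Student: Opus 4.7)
The plan is to prove all four statements simultaneously by induction on the truncation level $n$, each statement following the same template and inducting on itself. The driving observation is that condition (Pr-I) at level $n$ furnishes an isomorphism $\iota_n: f^* \xrightarrow{\cong} f^\natural$ where $f^\natural \dashv f_!$; transporting this adjunction along $\iota_n$ yields an adjunction $f^* \dashv f_!$, so the right adjoint $f_*$ of $f^*$ required by (Pr-II) exists and is canonically identified with $f_!$. Symmetrically, (Pr-II) provides $\epsilon_n: f_! \xrightarrow{\cong} f_*$, which exhibits $f^*$ as a left adjoint of $f_!$, supplying the $f^\natural$ required by (Pr-I); dually the (Ét) conditions both amount to the adjunction $f_! \dashv f^*$ via the corresponding isomorphisms. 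Consequently, for the morphism $f$ itself, the four conditions become essentially equivalent formulations of the same adjunction data; the role of ``stably'' is precisely to ensure that these identifications of adjoints propagate to pullbacks of $f$, which is necessary for the Beck-Chevalley squares to even make sense.

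The base case is $n=-2$: here $f$ is an equivalence, so the composition of the spans $(X \xleftarrow{=} X \xrightarrow{f} Y)$ and $(Y \xleftarrow{f} X \xrightarrow{=} X)$ is equivalent to the identity span, giving canonical isomorphisms $f^*f_! \cong \te{id}$ and $f_!f^* \cong \te{id}$. Thus $f^*$ and $f_!$ are mutually inverse equivalences, so each of (Pr-I), (Pr-II), (Ét-I), (Ét-II) holds automatically at level $-2$. For the inductive step of (1), I assume $n \geq -1$ and that $f$ is stably (Pr-I). The observation above supplies the right adjoint $f_* \cong f_!$, and applying it to the pullback $p_2: X \times_Y X \to X$ (which is stably (Pr-I) by hypothesis) supplies $p_{2,*} \cong p_{2,!}$. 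Stability also propagates inductively to the diagonal, so $\Delta_f$ is stably (Pr-I) at level $n-1$, and by the inductive hypothesis $\Delta_f$ satisfies (Pr-II), providing the required $\epsilon_{n-1}: \Delta_{f,!} \cong \Delta_{f,*}$.

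It remains to verify that the square in (Pr-II)(2) is vertically right-adjoinable. Under the identifications $f_* = f_!$ and $p_{2,*} = p_{2,!}$, the Beck-Chevalley map $f_! p_{2,*} \to f_* p_{1,!}$ becomes a map between two presentations of $(f p_2)_! = (f p_1)_!$ inside the two-functor formalism; unwinding the unit/base-change/counit composite defining this map through the transported adjunctions $f^* \dashv f_!$ and $p_2^* \dashv p_{2,!}$, it identifies with the canonical coherence isomorphism coming from the equality $f p_1 = f p_2$, hence is an isomorphism. This check is the one genuinely substantial calculation in the argument, and I expect it to be the main obstacle: everything else is formal transport of adjunctions, but tracing the Beck-Chevalley composite through the identifications supplied by (Pr-I) requires a careful mate-calculus manipulation. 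Statements (2), (3), (4) follow from the same scheme under the evident dualities: (2) exchanges left and right adjoints throughout relative to (1), while (3) and (4) perform the analogous exchange after swapping the proper/étale directions, i.e., replacing the adjunction $f^* \dashv f_!$ by $f_! \dashv f^*$.
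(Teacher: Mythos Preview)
Your overall scheme matches the paper's proof exactly: induction on the truncation level, transport of the adjunction $f^\natural\dashv f_!$ along $\iota_n$ to obtain $f^*\dashv f_!$ (hence $f_*\simeq f_!$), and reduction of (Pr-II)(2) to the claim that the Beck--Chevalley map for the base-change square of $f$ along itself is an isomorphism under these identifications. Your flagged ``main obstacle'' is precisely the content of the paper's argument.

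What you should know is that this step is not a routine mate manipulation, and your heuristic ``it identifies with the coherence isomorphism for $fp_1=fp_2$'' does not yet constitute a proof. The paper handles it by proving a slightly stronger intermediate statement: for \emph{any} cartesian square with vertical arrows $f,\bar f$ satisfying (Pr-I), the functoriality square in $(-)_!$ is vertically left-adjoinable along $(-)^\natural\dashv(-)_!$, and the resulting square is homotopy equivalent to the $(-)^*$-square coming from functoriality. This is established by a three-dimensional diagram chase (a cube assembled from the $\Delta_f$-, $p_1$-, and $p_2$-layers); one passes to vertical left adjoints, and the inductive hypothesis applied to $\Delta_f$ identifies the top face with the $(-)^*$-square for the diagonal, after which the remaining faces collapse. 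So the inductive hypothesis on $\Delta_f$ is not only used to furnish $\epsilon_{n-1}$, but enters again inside the identification of the Beck--Chevalley map. Once you have this stronger statement, your (Pr-II)(2) square is the special case $g=f$, and the right-adjoinability follows by passing back along the transported adjunction.
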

\begin{proof}
    The arguments are symmetric, and therefore we only prove (1).  If $f$ is $\minus 2$-truncated, then $f$ satisfies (Pr-II) automatically. By induction, we assume that (1) holds for $n$-truncated morphisms, and we assume $f$ is $(n+1)$-truncated. To show that $f$ satisfies (Pr-II), it suffices to show that for any cartesian square 
    \begin{center}
					\begin{tikzcd}
						X' \ar[r, "\bar{g}"]\ar[d, "\bar{f}"]  & X\ar[d, "f"]\\
						Y'\ar[r, "g"] & Y,
					\end{tikzcd}
				\end{center}
where $f$ satisfies (Pr-I), the square 
\begin{equation}\label{eq:square}
        \begin{tikzcd}
          \D(Y') \arrow[d, "\bar{f}^*"]\arrow[r,"g_!"]&\D(Y) \arrow[d, "f^*"]\\
          \D(X')\arrow[r,"g_!"] & \D(X)
        \end{tikzcd}
    \end{equation}
    is vertically right-adjoinable. By assumption, there are natural adjunctions $f^\natural \vdash f_!$ and $\bar{f}^\natural \vdash \bar{f}_!$. Therefore it suffices to show that the square
    \begin{center}
					\begin{tikzcd}
						\D(X') \ar[r, "{\bar{g}_!}"]\ar[d, "{\bar{f}_!}"]  & \D(X)\ar[d, "f_!"]\\
						\D(Y')\ar[r, "g_!"] & \D(Y),
					\end{tikzcd}
				\end{center} 
is vertically left-adjoinable along these adjunctions, and that the resulting square is homotopy equivalent to (\ref{eq:square}). This holds for $-2$-truncated morphisms, since $f^\natural$ and $f^*$ are both quasi-inverse to $f_!$. By induction, we also assume that this claim holds for $\Delta_f$. Now we consider the commuting diagram\begin{center}
\begin{tikzcd}
& \D(X') \arrow[rr, "\bar{g}_!"] \arrow[d, "{\Delta_{\bar{f},!}}"']         &          & \D(X) \arrow[d, "{\Delta_{f,!}}"]       \\
& \D(X'\times_{Y'}X') \arrow[rr, "\bar{g}'_!"] \arrow[dd, "{\bar{p}_{1,!}}"{yshift=1.5em}] &       & \D(X\times_Y X) \arrow[dd, "{p_{1,!}}"] \\
\D(X') \arrow[ru, "\bar{p}_2^*"] \arrow[rr, "\bar{g}_!"'{xshift=-1em}] \arrow[dd, "\bar{f}_!"] &        & \D(X) \arrow[ru, "p_2^*"] \arrow[dd, "f_!"{yshift=1.5em}] &    \\
 & \D(X') \arrow[rr, "\bar{g}_!"{xshift=-1.5em}]        &          & \D(X)        \\
\D(Y') \arrow[ru, "\bar{f}^*"] \arrow[rr, "g_!"]      &                & \D(Y) \arrow[ru, "f^*"]                     &                                        
\end{tikzcd}\end{center}
From this we can form the diagram
\begin{equation}\label{eq:bench}
\begin{tikzcd}
& \D(X') \arrow[rr, "\bar{g}_!"]          &          & \D(X)   \\
& \D(X'\times_{Y'}X') \arrow[rr, "\bar{g}'_!"]  \arrow[u, "\Delta_{\bar{f}}^\natural" ] &       & \D(X\times_Y X)\arrow[u, "\Delta_f^\natural"] \\
\D(X') \arrow[ru, "\bar{p}_2^*"] \arrow[rr, "\bar{g}_!"'{xshift=-1em}] &        & \D(X) \arrow[ru, "p_2^*"] &    \\
 & \D(X') \arrow[rr, "\bar{g}_!"{xshift=-1.5em}] \arrow[uu, "{\bar{p}_{1}^\natural}"{yshift=1.5em}]        &          & \D(X)  \arrow[uu, "p_1^\natural"]      \\
\D(Y') \arrow[ru, "\bar{f}^*"] \arrow[rr, "g_!"] \arrow[uu, "\bar{f}^\natural"]     &                & \D(Y) \arrow[ru, "f^*"]  \arrow[uu, "f^\natural"{yshift=1.5em}]                    &                                          
\end{tikzcd}
\end{equation}
by taking vertical adjoints along the provided adjunction. By the induction hypothesis, since $\Delta_f$ is $n$-truncated, the upper square is homotopic to the one that involves $\Delta_{\bar{f}}^*$ and $\Delta_f^*$ and comes from functoriality of $\D$.
Since $f$ and $\bar{f}$ satisfy (Pr-I), the left and right face of the cube are filled with isomorphisms.  In the diagram
\begin{center}
   \begin{tikzcd}
    \D(X') \arrow[rr,"{\bar{g}_!}"] & & \D(X) \\
    \D(X'\times_{Y'}X') \arrow[u, "{\Delta_{\bar{f}}^\natural}"] \arrow[rr, "{\bar{g}'_!}"] & & \D(X \times_Y X) \arrow[u, "\Delta_f^\natural"]\\
    \D(X') \arrow[u, "\bar{p}_2^\natural"] \arrow[rr,"\bar{g}_!"] & & \D(X)\arrow[u, "p_2^\natural"]
\end{tikzcd} 
\end{center}
the outer square commutes and is homotopic to the one that comes from functoriality of $\D$, since we can apply the claim we are proving to the $-2$-truncated morphism $p_2\circ\Delta_f\cong \textup{id}$. Since the upper square commutes by the induction hypothesis, so does the lower square. This shows that the diagram (\ref{eq:bench}) provides an isomorphism from 
\begin{center}
    \begin{tikzcd}
        \D(X') \arrow[r, "\bar{g}_!"] & \D(X) \\
        \D(Y') \arrow[u, "\bar{f}^\natural"] \arrow[r, "g_!"] & \D(Y) \arrow[u, "f^\natural"]
    \end{tikzcd} \  \ to \ \
    \begin{tikzcd}
        \D(X') \arrow[r, "\bar{g}_!"] & \D(X) \\
        \D(Y') \arrow[u, "\bar{f}^*"] \arrow[r, "g_!"] & \D(Y), \arrow[u, "f^*"]
    \end{tikzcd}\end{center}
completing the proof.
\end{proof}
\begin{corollary}\label{cor:more_adjoinable_squares}
    Let $\D:\Corr(\C,E)\to \cat$ be a two-functor formalism, and     \begin{equation}\label{eq:again_a_generic_square}
					\begin{tikzcd}
						X' \ar[r, "\bar{g}"]\ar[d, "\bar{f}"]  & X\ar[d, "f"]\\
						Y'\ar[r, "g"] & Y,
					\end{tikzcd}
				\end{equation} a cartesian square in $\C$. 
If $f$ and $\bar{f}$ both satisfy (Pr-I) (resp. (Ét-I)) stably or both satisfy (Pr-II) (resp. (Ét-II)) stably, then the squares
        \begin{center}
					\begin{tikzcd}
						\D(X') \ar[r, "\bar{g}_!"]\ar[d, "\bar{f}_!"]  & \D(X)\ar[d, "f_!"]\\
						\D(Y')\ar[r, "g_!"] & \D(Y)
					\end{tikzcd}
				\ \ and \ \ 
					\begin{tikzcd}
						\D(X') \ar[d, "\bar{f}_!"]  & \D(X)\ar[d, "f_!"] \ar[l, "\bar{g}^*"']\\
						\D(Y') & \D(Y) \ar[l, "g^*"']
					\end{tikzcd} \end{center} are (up to homotopy equivalence) the vertical right-adjoints (resp. left-adjoints) of the squares   \begin{center}
					\begin{tikzcd}
						\D(X') \ar[r, "\bar{g}_!"]  & \D(X)\\
						\D(Y')\ar[u, "\bar{f}^*"]\ar[r, "g_!"] & \D(Y)\ar[u, "f^*"]
					\end{tikzcd}
				\ \ and \ \ 
					\begin{tikzcd}
						\D(X')   & \D(X) \ar[l, "\bar{g}^*"']\\
						\D(Y')\ar[u, "\bar{f}^*"] & \D(Y) \ar[u, "f^*"] \ar[l, "g^*"']
					\end{tikzcd} \end{center} along the canonical adjunctions.
\end{corollary}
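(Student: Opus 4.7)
The strategy is a direct adaptation of the inductive argument in Lemma~\ref{lem:stably}, and in fact the first square in the statement follows almost immediately from what is proved there. By the four-fold symmetry I would reduce to the (Pr-I) stable case; the (Ét-I) case is dual by reversing all adjunctions, and the (Pr-II) and (Ét-II) cases are formal consequences, either by repeating the argument or by combining with Lemma~\ref{lem:stably}, which converts between (I) and (II) conditions under stability.

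Under (Pr-I), the canonical natural isomorphisms $f^* \cong f^\natural$ and $\bar{f}^* \cong \bar{f}^\natural$ identify the canonical adjunctions $f^* \dashv f_!$ and $\bar{f}^* \dashv \bar{f}_!$ with $f^\natural \dashv f_!$ and $\bar{f}^\natural \dashv \bar{f}_!$. Hence the claim that the target square (with $f_!, \bar{f}_!$ vertical) is the vertical right adjoint of the source square (with $f^*, \bar{f}^*$ vertical) translates equivalently into the claim that the source square is the vertical \emph{left} adjoint of the target square along this second pair of adjunctions.

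For the first pair, with horizontal arrows $g_!, \bar{g}_!$, this is exactly the content of the proof of Lemma~\ref{lem:stably}: the cube diagram~(\ref{eq:bench}) and the verifications that follow it produce precisely a homotopy equivalence between the functorial source square and the square obtained by taking vertical left adjoints of the target. I would therefore simply quote this part of the proof.

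For the second pair, with horizontal arrows $g^*, \bar{g}^*$, I would rerun the same induction on the $n$-truncatedness of $f$. The base case $n=-2$ is immediate since $f$ and $\bar{f}$ are then equivalences. In the inductive step I would build the cube analogous to~(\ref{eq:bench}), with horizontal edges $\bar{g}^*, g^*$ (and their canonical lifts to the double pullbacks $X\times_Y X$ and $X'\times_{Y'} X'$), then apply the induction hypothesis to the $(n-1)$-truncated diagonal $\Delta_f$ to fill the upper face, and apply it further to the $-2$-truncated composite $p_2\circ \Delta_f\simeq \te{id}$ to fill the lower face. The hard step, and the only real bookkeeping, is verifying coherent commutativity of all six faces of this cube --- tracking the naturality of base change isomorphisms, of units and counits of the various adjunctions, and of the functoriality of $\mathcal{D}$ --- but this is identical to the argument in Lemma~\ref{lem:stably} and does not depend on the specific form of the horizontal functors.
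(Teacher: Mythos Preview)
Your proposal is correct and follows essentially the same approach as the paper: the paper's proof consists of a single sentence observing that the strategy of Lemma~\ref{lem:stably} applies verbatim, with the horizontal arrows in diagrams~(\ref{eq:square}) and~(\ref{eq:bench}) replaceable by their $(-)^*$-counterparts. You have simply spelled out in more detail what that sentence means, including the reduction to (Pr-I) by symmetry and the translation via $f^*\cong f^\natural$ to the left-adjoinability statement already established inside the proof of Lemma~\ref{lem:stably}.
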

\begin{proof}
    The same strategy as in the proof of Lemma \ref{lem:stably} applies; note that the left-to-right arrows in the diagrams (\ref{eq:square}) and (\ref{eq:bench}) can just as well be replaced by their $(-)^*$-counterparts. 
\end{proof}

\begin{proposition}\label{Different_definitions}
			Let $ \mathcal{D}: \te{Corr}(\mathcal{C}, E)^\otimes\to \te{Cat}_\infty $ be a six-functor formalism\footnote{With some care, one can reformulate \cite[Section 6]{scholze_notes} to make his definition apply to $ 3 $-functor formalisms. An according reformulation of our proof also applies in this generality.} and $ (f: X\to Y)\in E $. The following are equivalent:
				\begin{enumerate}
                    \item $f$ satisfies (Pr-II) (resp. (Ét-I)),
                    \item $f$ satisfies (Pr-II) (resp. (Ét-I)) stably,
				\item $ f $ is cohomologically proper (resp. cohomologically \'etale) in the sense of \cite[Definition 6.10 resp. Definition 6.12]{scholze_notes}.
				\end{enumerate}
		\end{proposition}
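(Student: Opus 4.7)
The plan is as follows. The implication (2) $\Rightarrow$ (1) is immediate. The content of the proposition lies in two further steps: (1) $\Rightarrow$ (2), which leverages the full base change available in a six-functor formalism, and (1) $\Leftrightarrow$ (3), which amounts to matching Scholze's inductive construction of the isomorphism $f_! \cong f_*$ (resp.\ $f^* \cong f^!$) against the isomorphism $\epsilon_n$ (resp.\ $\iota_n$) produced by Definition \ref{defn:Pr_Et}.

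For (1) $\Rightarrow$ (2), I would induct on the truncatedness index $n \geq -2$. The $-2$-truncated case is trivial since $-2$-truncatedness is preserved under base change and $f_! \cong f_*$ is vacuous. For the inductive step, fix a cartesian square as in (\ref{eq:again_a_generic_square}) with $f$ satisfying (Pr-II). The diagonal $\Delta_{\bar f}$ is the base change of $\Delta_f$, which by hypothesis satisfies (Pr-II); by induction $\Delta_f$ satisfies (Pr-II) stably, so $\Delta_{\bar f}$ satisfies (Pr-II), producing the required isomorphism $\Delta_{\bar f,!} \cong \Delta_{\bar f,*}$. It remains to verify that the second square for $\bar f$ is vertically right-adjoinable. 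Because $\D$ is a six-functor formalism, the square for $f$ pulls back along $g$, via the proper base change isomorphism $g^*f_! \cong \bar f_! \bar g^*$, to the square for $\bar f$. Corollary \ref{cor:more_adjoinable_squares} applied to all four faces of the resulting cube shows that the vertical right-adjoinability at $f$ transfers to that at $\bar f$. Hence $\bar f$ satisfies (Pr-II), proving stability. The (Ét-I) case is dual.

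For (1) $\Leftrightarrow$ (3), I would unfold \cite[Definitions 6.10, 6.12]{scholze_notes}: inductively, $f$ is cohomologically proper if $\Delta_f$ is cohomologically proper (supplying an isomorphism $\Delta_{f,!} \cong \Delta_{f,*}$) and the map $f_! \to f_*$ assembled from this datum and the base change isomorphism $f^*f_! \cong p_{1,!} p_2^*$ is itself an isomorphism. But this assembly is exactly the formula defining $\epsilon_n$ in Definition \ref{defn:Pr_Et}, together with the assertion that the second square is vertically right-adjoinable. So the two formulations describe the same data, and (1) $\Leftrightarrow$ (3) follows by induction on $n$; the étale case is dual.

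I expect the main obstacle to be diagram-level coherence in the last step. Scholze's definition is phrased in terms of the existence of a specified natural transformation and its being an equivalence, whereas ours is phrased as a Beck-Chevalley adjoinability condition. One must carefully trace the chain of isomorphisms defining $\epsilon_n$ and verify, using the symmetric monoidal functoriality of $\mathcal{D}$ on $\Corr(\mathcal{C}, E)^\otimes$ and the proper base change, that it agrees up to canonical isomorphism with Scholze's construction. A secondary technicality in (1) $\Rightarrow$ (2) is checking that the cube obtained by pulling back along $g$ is coherently filled, so that Corollary \ref{cor:more_adjoinable_squares} may be applied; this is essentially bookkeeping, parallel to the proof of Lemma \ref{lem:stably}.
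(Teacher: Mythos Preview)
Your proposal has a genuine gap in the implication $(1)\Rightarrow(2)$, and mischaracterises what is needed for $(1)\Leftrightarrow(3)$.

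For $(1)\Rightarrow(2)$: invoking Corollary \ref{cor:more_adjoinable_squares} is circular, since that corollary already assumes $f$ and $\bar f$ satisfy (Pr-II) \emph{stably}. More to the point, the cube argument you sketch needs the side faces (connecting the square for $f$ to that for $\bar f$ via $g^*,\bar g^*$) to be vertically right-adjoinable; but this is exactly a base-change isomorphism of the form $g^*f_*\to \bar f_*\bar g^*$, which is not given by the six-functor formalism and is essentially what you are trying to prove. The paper avoids this by routing through Scholze's condition: one shows $1\Rightarrow 3$, then uses that cohomological properness in Scholze's sense is stable under base change (because $f$-proper objects are), yielding $3\Rightarrow 2$.

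For $(1)\Leftrightarrow(3)$: Scholze's Definition 6.10 is \emph{not} phrased as a Beck--Chevalley condition; it asks that $\Delta_f$ be cohomologically proper and that $1_X$ be an $f$-proper object, i.e.\ that certain explicit (co)evaluation maps $\alpha:1_Y\to f_!1_X$ and $\beta:1_{X\times_Y X}\to\Delta_{f,!}1_X$ exist satisfying a triangle identity. For $1\Rightarrow 3$ the paper reduces, via \cite[Proposition 6.11]{scholze_notes}, to checking that a specific map $f_!1_X\to f_*1_X$ is an isomorphism, and identifies it with $\epsilon_n$ evaluated at $1_X$; this is close to what you say but requires that external reduction. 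The converse $3\Rightarrow 1$ is the substantive part: one must exhibit the unit and counit of $f^*\dashv f_!$ and $p_1^*\dashv p_{1,!}$ explicitly via the projection formula and the maps $\alpha,\beta$, and then run a nontrivial diagram chase in $\te{Corr}(\mathcal{C}^{\te{op},\sqcup,\te{op}})$ to verify that the resulting Beck--Chevalley map agrees with $f_!(p_{2,!}\otimes(\star))$ and is therefore invertible. ``The two formulations describe the same data'' does not capture this.
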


		\begin{proof}
            We treat the cohomologically proper case, the cohomologically \'etale case is dual.
            For $1\implies 3$ use induction and \cite[Proposition 6.11]{scholze_notes}. This reduces us to showing that the canonical morphism $f_! 1_X\simeq f_! p_{1, *} \Delta_{f, *} 1_X\simeq   f_! p_{1, *} \Delta_{f, !} 1_X\to f_*1_X $ coming by adjunction from
                \begin{align*}
                    f^*f_! p_{1, *} \Delta_{f, !} 1_X \simeq p_{2, !}p_1^*p_{1, *} \Delta_{f, !} 1_X \to p_{2, !}\Delta_{f, !} 1_X\simeq 1_X
                \end{align*}
            is an isomorphism.
            However, this is nothing but the isomorphism $f_!\to f_*$ provided by condition (Pr-II) in Definition \ref{defn:Pr_Et}, evaluated at $1_X$.

            For $3\implies 1$, use that, by induction, it suffices to show that 
                \begin{center}
                        \begin{tikzcd}
                            \mathcal{D}( X\times_Y X) \arrow[r, "{p_{2, !}}"]   & \mathcal{D}( X)                   \\
                            \mathcal{D}( X) \arrow[r, "f_!"] \arrow[u, "p_1^*"] & \mathcal{D}( Y) \arrow[u, "f^*"']
                        \end{tikzcd}
                \end{center}
            is vertically right-adjoinable. Cohomologically proper morphisms are stable under base change (since $f$-proper objects are, see also \cite[Remark 6.2]{scholze_notes}), so $p_1: X\times_Y X\to X$ is also cohomologically proper, and in particular, $p_1$ admits a right-adjoint.\footnote{If we have a six-functor formalism, this is tautological. But the corresponding statement for cohomologically \'etale morphisms is not. So we keep it for symmetry.}
            It remains to show that the adjunct $f_! p_{1, !}\to f_! p_{2, !}$ along the canonical adjunctions is an isomorphism. The fact that $1_X$ is $f$-proper provides maps
                \begin{align*}
                    \alpha: 1_Y\to f_! 1_X && \beta: 1_{X\times_YX}\to \Delta_{f,!}1_X
                \end{align*}
            such that
                \begin{align} \tag{$\star $}
 					1_X=f^*1_Y\overset{\alpha}{\to}f^*f_!1_X\simeq p_{2, !}p_1^*1_X\simeq p_{2, !}1_{X\times_Y X}\overset{\beta}{\to} p_{2, !}\Delta_{f, !} 1_X\simeq 1_X
 				\end{align}
            is the identity. Moreover, the canonical adjunctions $f^*\dashv f_!$ and $p_{1}^*\dashv p_{1, !}$ have unit 
                \begin{align*}
                    \te{id}\simeq \te{id}\otimes 1_Y\overset{\alpha}{\to} \te{id}\otimes f_! 1_X\simeq f_! f^*(\te{id}\otimes 1_Y)\simeq f_! f^*
                \end{align*}
            resp. counit
                \begin{align*}
                    p_1^*p_{1, !}&\simeq p_{13, !}(p_{12}^*\otimes p_{23}^*1_{X\times_Y X}) \overset{\beta}{\to} p_{13, !} (p_{12}^* \otimes p_{23}^*\Delta_{f,!}1_X) \\
                    &\simeq p_{13, !} (p_{12}^* \otimes (\te{id}, \Delta_{f})_!1_X) \simeq p_{13, !}(\te{id}, \Delta_f)_!(id, \Delta_f)^*p_{12}^*\simeq  \te{id}.
                \end{align*}
            We compare natural map $f_!p_{1,!}\to f_! p_{2,!}$ to  $f_!(p_{2, !}\otimes (\star))$. To this end define the natural isomorphism 
                \begin{align*}
                    \gamma_{A, B}: &f_! p_{2, !}p_{13, !}(p_{12}^*A\otimes p_{23}^*B)\simeq f_!p_{2, !}p_{12, !}(p_{12}^*A\otimes p_{23}^*B)\simeq f_!p_{2, !}(A\otimes p_{12, !}p_{23}^*B)\\
                    &\simeq f_! p_{2, !}(A\otimes p_{2}^* p_{1, !}B) \simeq f_!(p_{2, !}A\otimes p_{1, !}B).
                \end{align*}
            Consider the diagram
                \begin{center}
                        \begin{tikzcd}[row sep=small]
                            {(f_! p_{1, !})\otimes 1_Y} \arrow[r, "\simeq"] \arrow[d, "\alpha"]                                                              & {(f_!p_{2, !})\otimes 1_Y} \arrow[r, "\simeq"] \arrow[d, "\alpha"] \arrow[ld, "A", phantom] & {f_!(p_{2, !}\otimes f^* 1_Y)} \arrow[d, "\alpha"] \arrow[ld, "B", phantom]                 \\
                            {(f_!p_{1, !}) \otimes f_! 1_X} \arrow[r, "\simeq"] \arrow[d, "\simeq"]                                                          & {(f_!p_{2, !})\otimes f_!1_X}\arrow[r, "\simeq"]                                                               & {f_!(p_{2, !}\otimes f^*f_!1_X)} \arrow[ddd, "\simeq"] \arrow[llddd, "C", phantom]          \\
                            {f_!f^*f_!p_{1, !}} \arrow[d, "\te{BC}"]                                                                                         &                                                                                             &                                                                                             \\
                            {f_!p_{2, !}p_1^*p_{1, !}} \arrow[d, "\simeq"]                                                                                   &                                                                                             &                                                                                             \\
                            {f_!p_{2, !}p_{13, !}(p_{12}^*\otimes p_{23}^*1_{X\times_YX})} \arrow[rr, "{\gamma_{-,1_{X\times_Y X}}}"] \arrow[d, "\beta"]     &                                                                                             & {f_!(p_{2, !}\otimes p_{1, !}1_{X\times_Y X})} \arrow[d, "\beta"] \arrow[lld, "D", phantom] \\
                            {f_!p_{2, !}p_{13, !}(p_{12}^*\otimes p_{23}^*\Delta_{f, !}1_X)} \arrow[rr, "{\gamma_{-, \Delta_{f,!}1_X}}"] \arrow[d, "\simeq"] &                                                                                             & {f_!(p_{2, !}\otimes p_{1, !}\Delta_{f,!} 1_X)} \arrow[d, "\simeq"]                                   \\
                            {f_!p_{2, !}}                                                                                                                    &                                                                                             & {f_!p_{2,!}.}                                                                                                 
                    \end{tikzcd}
                \end{center}
            The squares $A$ and $B$ are filled by isomorphisms provided by the 6-functor formalism witnessing, that compositions of shriek functors resp. the projection formula are functorial and $D$ is filled with the functoriality constraint of $\gamma_{A, B}$. $C$ is filled as follows: We interpret it as a diagram in the anima of morphisms  $(X\times_Y X)\to (Y)$ in $\te{Corr}(\mathcal{C}^{\te{op},\sqcup, \te{op}}) $. Abstractly, all objects in the diagram are isomorphic to functor represented by the correspondence
                \begin{center}
                    \begin{tikzcd}
                        (X\times_YX)& (X\times_Y X\times_Y X)\arrow[l, "p_{12}"']\arrow[r]& (Y).
                    \end{tikzcd}
                \end{center}

         Carefully translating, the diagram $C$ is represented a diagram of isomorphisms between the apexes of the representing correspondences
            filled with the canonical isomorphisms coming from the universal properties of pullbacks.

            The composition of the right-most column is an isomorphism as $(\star)$ is and the composition of the left-most column is the right-adjoint of the base-change isomorphism. Hence the latter is also an isomorphism. This concludes $3\implies 1$.

            Since cohomologically proper morphisms are stable under base change, we get $3 \implies 2$. The implication $2\implies 1$ is trivial.
		\end{proof}

Lemma \ref{lem:stably} justifies the first half of the following definition, and Proposition \ref{Different_definitions} the second half.

\begin{definition}
	Given a two-functor formalism $\D$, we call a morphism $ f $ \emph{stably cohomologically proper}, if it is stably $ (\te{Pr-I}) $ or equivalently stably $ (\te{Pr-I}) $; and call it \emph{stably cohomologically proper} if it is stably $ (\te{\'Et- I}) $ or equivalently stably $ (\te{\'Et-II}) $.

    If $\D$ is a six-functor formalism, then we call a morphism \emph{cohomologically proper} if it is (stably) $ (\te{Pr-I}) $ or equivalently $ (\te{Pr-I}) $; and call it \emph{stably cohomologically proper} if it is (stably) $ (\te{\'Et- I}) $ or equivalently $ (\te{\'Et-II}) $.
\end{definition}

We arrive at the following definition.
\begin{definition}\label{defn:nagata}
    Given Nagata set-up $(\C,E,I,P)$, we call a two-functor formalism
    
    $\D:\te{Corr}(\C,E)\to \cat$
    \textit{Nagata} if every morphism in $P$ is stably cohomologically proper and every morphism in $I$ is stably cohomologically \'etale.
    We write $2FF(\C,E,I,P)$ for the full subcategory of $\Fun(\Corr(\C,E),\cat)$ spanned by two-functor formalisms that are Nagata. 
    
    We say that a three-functor formalism 
    $\D:\te{Corr}(\C,E)^\otimes \to \cat$ is Nagata if the corresponding two-functor formalism $ \te{Corr}(\mathcal{C}^{\te{op}, \sqcup, \te{op}}, E_\minus)\to \te{Cat}_\infty$ is Nagata w.r.t. the Nagata set-up $ (\mathcal{C}^{\te{op}, \sqcup, \te{op}}, E_\minus, I_\minus, P_\minus) $. We write $3FF(\C,E,I,P)$ for the full subcategory of 
    
    $\Fun(\Corr(\C,E)^\otimes,\cat)$ on three-functor formalisms that are Nagata. 
    
    We say that a six-functor formalism is Nagata if the underlying three-functor formalism is. We write $6FF(\C,E,I,P)$ for the full subcategory of $\Fun(\Corr(\C,E)^\otimes,\cat)$ on six-functor formalisms that are Nagata. 
\end{definition}



The following proposition characterizes natural transformations between Nagata two-functor formalisms.

 \begin{proposition}\label{prop:natural_transformation}
    Let $\D,{\D}':\Corr(\C,E)\to \cat$ be two-functor formalisms and $\alpha:\D\to \D'$ be a natural transformation. If $f:X\to Y$ in $E$ is cohomologically proper (resp. cohomologically \'etale), then the square 
    \begin{center}
        
            \begin{tikzcd}
                \D(X) \arrow[r, "\alpha_X"] \ar[d, "f_!"] & \D'(X) \ar[d,"f_!"]\\
                \D(Y) \ar[r, "\alpha_Y"] & \D'(Y)
            \end{tikzcd}
    
    \end{center} is (up to homotopy equivalence) the right (resp. left) adjoint of  \begin{center}
        
            \begin{tikzcd}
                \D(X) \arrow[r, "\alpha_X"]  & \D'(X) \\
                \D(Y) \ar[r, "\alpha_Y"] \ar[u, "f^*"] & \D'(Y) \ar[u, "f^*"]
            \end{tikzcd}
    
    \end{center} along the canonical adjunctions.
\end{proposition}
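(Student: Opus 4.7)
The plan is to mirror the inductive cube-diagram strategy used in Lemma \ref{lem:stably} and Corollary \ref{cor:more_adjoinable_squares}, inducting on the truncation degree $n$ of $f$. The proper and \'etale cases are dual, so I will prove the proper case; suppose $f$ satisfies $(\te{Pr-I})$ stably in both $\D$ and $\D'$ (hence also $(\te{Pr-II})$ stably, by Lemma \ref{lem:stably}), giving on each side the canonical adjunction $f^*\simeq f^\natural \dashv f_!\simeq f_*$. The base case $n=-2$ is tautological: $f$ is an equivalence, so $f_!$ and $f^*$ are mutual quasi-inverses in both $\D$ and $\D'$, and the two $\alpha$-naturality squares are manifestly mates of each other along the canonical adjunctions.

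For the inductive step, suppose the proposition holds for $n$-truncated cohomologically proper morphisms, and let $f$ be $(n+1)$-truncated cohomologically proper. The diagonal $\Delta_f$ is $n$-truncated cohomologically proper by the definition of $(\te{Pr-I})$, and $p_1,p_2:X\times_Y X\to X$ are cohomologically proper as base changes of $f$. I form a cube whose opposite faces are the $(\te{Pr-II})$ Beck--Chevalley square
\begin{center}
\begin{tikzcd}
\D(X) \ar[r, "p_2^*"] \ar[d, "f_!"] & \D(X\times_Y X) \ar[d, "p_{1,!}"]\\
\D(Y) \ar[r, "f^*"] & \D(X)
\end{tikzcd}
\end{center}
for $\D$ and the analogous square for $\D'$, with the remaining four faces given by the $\alpha$-naturality squares for $f_!$, $p_2^*$, $p_{1,!}$ and $f^*$. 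Since $\alpha$ is a natural transformation of functors out of $\Corr(\C,E)$, it respects span composition and therefore preserves the base-change isomorphism $f^*f_!\cong p_{1,!}p_2^*$ together with its mate. Invoking the induction hypothesis on $\Delta_f$ and the tautological $-2$-truncated case applied to $p_2\circ \Delta_f = \te{id}$ --- exactly in the style of diagram (\ref{eq:bench}) --- propagates the mate-compatibility from these simpler pieces to $f$ itself, showing that the mate of the $\alpha$-naturality square for $f^*$ is homotopic to the $\alpha$-naturality square for $f_!$. The \'etale case follows by dualising and swapping left and right adjoints throughout.

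The main obstacle will be bookkeeping the coherence data populating the cube --- Beck--Chevalley in $\D$, Beck--Chevalley in $\D'$, $\alpha$-naturality on various spans, and the identifications $f^\natural\simeq f^*$ together with $f_*\simeq f_!$ from $(\te{Pr-I})$ and $(\te{Pr-II})$ --- and verifying that these assemble into a single commuting cube up to canonical homotopy. The reduction of each piece either to the induction hypothesis for $\Delta_f$ or to the tautological $-2$-truncated case for $p_2\circ \Delta_f$ is essentially the same manipulation as in the proof of Lemma \ref{lem:stably}, and as there the argument is purely formal once the cube is written down.
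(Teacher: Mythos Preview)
Your proposal is correct and follows essentially the same approach as the paper: the paper's proof literally says ``this proof again follows the strategy of the proof of Lemma~\ref{lem:stably}'' and draws the analogous bench diagram with $\alpha$ in place of $g_!$, then takes vertical right adjoints. Your description of the inductive cube, the appeal to the induction hypothesis on $\Delta_f$, and the use of the $(-2)$-truncated case for $p_2\circ\Delta_f=\mathrm{id}$ match the paper's argument exactly.
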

\begin{proof}
    This proof again follows the strategy of the proof of Lemma \ref{lem:stably}. For example, suppose that $f$ is cohomologically proper. Then we can apply the proof strategy to the  the diagram
\begin{center}
     \begin{tikzcd}
                            & \mathcal{D}(X) \arrow[rr, "\alpha_X"]                                                                              &                                    & \mathcal{D}'(X)                                                               \\
                            & \mathcal{D}(X\times_{Y} X) \arrow[rr, "\alpha_{X\times_Y X}"] \arrow[u, "\Delta_{f}^*"'] \arrow[ld, "{p_{1, !}}"'] &                                    & \mathcal{D}'(X\times_Y X) \arrow[u, "\Delta_{f}^*"'] \arrow[ld, "{p_{1,!}}"'] \\
                            \mathcal{D}(X) \arrow[rr, "\alpha_X"{xshift=-1.5em}]                    &                                                                                                                    & \mathcal{D}'(X)                    &                                                                               \\
                            & \mathcal{D}(X) \arrow[rr, "\alpha_X"{xshift=-1.5em}] \arrow[ld, "f_!"'] \arrow[uu, "p_{2}^*"'{yshift=1.5em}]                                    &                                    & \mathcal{D}'(X) \arrow[ld, "f_!"'] \arrow[uu, "p_{2}^*"']                     \\
                            \mathcal{D}(Y) \arrow[rr, "\alpha_Y"] \arrow[uu, "f^*"'] &                                                                                                                    & \mathcal{D}'(Y). \arrow[uu, "f^*"'{yshift=1.5em}] &                                                                              
                        \end{tikzcd}
\end{center}
    and take its vertical right-adjoint.
\end{proof}
Corollary \ref{cor:more_adjoinable_squares} and Proposition \ref{prop:natural_transformation} imply the following.
 \begin{corollary}\label{cor:same_as_nagata}
     For $(\C,E,I,P)$ a Nagata setup, the category $3FF(\C,E,I,P)$ coincides with the category $\mathbf{3FF}$ as defined in \cite{6ff}, and the category $6FF(\C,E,I,P)$ coincides with the category $\mathbf{6FF}$ as defined in loc. cit.
 \end{corollary}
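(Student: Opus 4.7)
The plan is to match the definitions of $\mathbf{3FF}$ and $\mathbf{6FF}$ from \cite{6ff} with those of $3FF(\C,E,I,P)$ and $6FF(\C,E,I,P)$ on the level of objects and on the level of morphisms separately, and to read off each identification as a direct consequence of the previously established results.

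On objects: a three-functor formalism $\D$ lies in $3FF(\C,E,I,P)$ when every $p \in P$ is stably cohomologically proper and every $j \in I$ is stably cohomologically étale in the sense of Definition~\ref{defn:Pr_Et}. By Proposition~\ref{Different_definitions}, these Beck-Chevalley formulations coincide with the original conditions of \cite[Definition~6.10, 6.12]{scholze_notes} that are used to define $\mathbf{3FF}$ and $\mathbf{6FF}$ in loc.\ cit.; in the six-functor case this is literally the statement of the proposition, and in the three-functor case the same inductive proof applies as noted in its footnote. Hence the underlying classes of objects agree.

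On morphisms: the morphisms of $\mathbf{3FF}$ (resp.\ $\mathbf{6FF}$) are natural transformations $\alpha\colon \D\to \D'$ required to satisfy certain adjoinability conditions along morphisms in $P$ and $I$, together with compatibility with the base change squares for such morphisms. Every natural transformation between Nagata two- or three-functor formalisms automatically satisfies these constraints: for the conditions indexed by a single morphism $f \in P$ (resp.\ $I$), this is exactly Proposition~\ref{prop:natural_transformation}, which identifies the square built from $\alpha$ and $f_!$ with the right (resp.\ left) adjoint of the square built from $\alpha$ and $f^*$; for the conditions indexed by a base change square with legs in $P$ or $I$, this is exactly Corollary~\ref{cor:more_adjoinable_squares}. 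In the reverse direction, any natural transformation in $\mathbf{3FF}$ or $\mathbf{6FF}$ is tautologically a natural transformation of three-functor formalisms in our sense, so no further adjoinability data is required to land in $3FF(\C,E,I,P)$ or $6FF(\C,E,I,P)$.

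Combining the two steps, the evident inclusion functors $3FF(\C,E,I,P) \hookrightarrow \mathbf{3FF}$ and $6FF(\C,E,I,P) \hookrightarrow \mathbf{6FF}$ are fully faithful and essentially surjective, hence equivalences. The main obstacle is not substantive but bookkeeping: one must verify that the passage between three-functor formalisms on $\C$ and two-functor formalisms on $\C^{\op,\sqcup,\op}$ used in Definition~\ref{defn:nagata} carries the $P_-$ and $I_-$ adjoinability conditions to precisely the conditions imposed in \cite{6ff}, so that Proposition~\ref{prop:natural_transformation} and Corollary~\ref{cor:more_adjoinable_squares} apply component-wise over each active object in $\fin_*$.
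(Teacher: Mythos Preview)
Your proof is correct and follows essentially the same route as the paper, which simply records that Corollary~\ref{cor:more_adjoinable_squares} and Proposition~\ref{prop:natural_transformation} imply the result. The one point worth flagging is that the paper does \emph{not} invoke Proposition~\ref{Different_definitions} here: the categories $\mathbf{3FF}$ and $\mathbf{6FF}$ in \cite{6ff} are defined via adjoinability conditions on base-change squares rather than via Scholze's inductive definitions, so the object-level match is handled by Corollary~\ref{cor:more_adjoinable_squares} (showing the $(-)_!$ squares arise as adjoints of the $(-)^*$ squares), while Proposition~\ref{prop:natural_transformation} handles the morphism-level match. Your use of Proposition~\ref{Different_definitions} is harmless but not needed.
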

 


\section{Scholze's conjecture}\label{sect:conjecture}
Let $(\mathcal{C}, E, I, P)$ be a Nagata set-up. In his thesis \cite[Theorem A.5.10]{mann_thesis}, Mann gives a convenient account of the Liu-Zheng construction, which requires the following properties of a functor
	\begin{align*}
		\mathcal{D}:\mathcal{C}^\op \to \textup{Cat}_\infty .
	\end{align*}


\begin{definition}\label{definition:BC functors}
 Let $(\mathcal{C},E,I,P)$ be a Nagata set-up.   Denote by $$\textup{BCFun}(\mathcal{C},E,I,P) \subseteq \textup{Fun}(\mathcal{C}^{\op},\textup{Cat}_\infty)$$ the subcategory spanned by functors
    $$\mathcal{D}:\mathcal{C}^\op \to \textup{Cat}_\infty $$ such that
    \begin{enumerate}
        \item for any cartesian diagram 	\begin{equation}\label{eq:another_generic_square}
								\begin{tikzcd}
									X' \ar[r, "\bar{g}"]\ar[d, "\bar{f}"]  & X\ar[d, "f"]\\
									Y'\ar[r, "g"] & Y,
								\end{tikzcd}
							\end{equation} if $f$ is in $P$, then then $f^*$ and $\bar{f}^*$ have right adjoints $f_*$ and $\bar{f}_*$ respectively, and 
       \begin{center}
									\begin{tikzcd}
										\mathcal{D}(X') & \mathcal{D}(X)\ar[l, "{\bar{g}^*}"'] \\
										\mathcal{D}(Y') \ar[u, "{\bar{f}^*}"']& \mathcal{D}(Y)\ar[l, "{g^*}"']\ar[u, "{f^*}"']
									\end{tikzcd}
								\end{center}
        is vertically right-adjoinable;
        \item for any cartesian diagram (\ref{eq:another_generic_square}), if $f$ is in $I$ then $f^*$ and $\bar{f}^*$ have left adjoints $f_\natural$ and $\bar{f}_\natural$ respectively, and 
	\begin{center}
									\begin{tikzcd}
										\mathcal{D}(X') & \mathcal{D}(X)\ar[l, "{\bar{g}^*}"'] \\
										\mathcal{D}(Y') \ar[u, "{\bar{f}^*}"']& \mathcal{D}(Y)\ar[l, "{g^*}"']\ar[u, "{f^*}"']
									\end{tikzcd}
								\end{center}
        is vertically left-adjoinable;
           \item for any cartesian diagram
							(\ref{eq:another_generic_square})
						with $ f\in I $ and $ g\in P $, the vertical left-adjoint
								\begin{center}
									\begin{tikzcd}
										\mathcal{D}(X')\ar[d, "{\bar{f}_\natural}"] & \mathcal{D}(X)\ar[l, "{\bar{g}^*}"']\ar[d, "{f_\natural}"] \\
										\mathcal{D}(Y') & \mathcal{D}(Y)\ar[l, "{g^*}"']
									\end{tikzcd}
								\end{center}				
						of the commutative diagram
								\begin{center}
									\begin{tikzcd}
										\mathcal{D}(X') & \mathcal{D}(X)\ar[l, "{\bar{g}^*}"'] \\
										\mathcal{D}(Y') \ar[u, "{\bar{f}^*}"']& \mathcal{D}(Y)\ar[l, "{g^*}"']\ar[u, "{f^*}"']
									\end{tikzcd}
								\end{center}
						is horizontally right-adjoinable. I.e. the canonical map $ f_\natural\bar{g}_*\to g_*\bar{f}_\natural $ is an isomorphism.
    \end{enumerate}
    and by natural transformations $\alpha:\mathcal{D}\to \mathcal{D'}$ such that for $f:X\to Y$ in $E$, the square
    \begin{center}
        \begin{tikzcd}
            \D(Y) \ar[d, "f^*"] \ar[r, "\alpha_Y"] & \D'(Y) \ar[d, "f^*"] \\
            \D(X) \ar[r, "\alpha_X"] & \D(X)
        \end{tikzcd}
    \end{center}
    is vertically right-adjoinable if $f$ is in $P$, and vertically left-adjoinable if $f$ is in $I$.

    Let $$\textup{BCFun}^{\textup{lax}}(\C,E,I,P) \subseteq \textup{BCFun}(\C^{\op, \sqcup,\op},E_-,I_-,P_-) $$
     be the full subcategory spanned by functors
     $$\C^{\op, \sqcup} \to \Cat_\infty $$
     that are lax cartesian structures.

     Let 
     $$\textup{BCFun}^{\textup{lax},L}(\C,E,I,P) \subseteq \textup{BCFun}^{\textup{lax}}(\C,E,I,P) $$
     denote the full subcategory spanned by lax cartesian structures 
          $$\mathcal{D}: \C^{\op, \sqcup} \to \Cat_\infty $$
          such that
  \begin{enumerate}
      \item[(a)] for every $X$ in $\C$, the symmetric monoidal category $\D(X)$ is closed,
      \item[(b)] for every $f:X\to Y$ in $\C$, $f^*$ has a right adjoint $f_*$,
      \item[(c)] for every $p:X\to Y$ in $P$, $f_*$ has a right adjoint $f^\natural$.
  \end{enumerate}
\end{definition}

\begin{theorem}[Scholze's conjecture]\label{theorem:scholzes conjecture}
    Let $(\mathcal{C},E,I,P)$ be a Nagata set-up.\linebreak Restriction along $ \mathcal{C}^{\te{op}}\hookrightarrow\te{Corr}(\mathcal{C},E) $ resp. $ \mathcal{C}^{\te{op}, \sqcup}\hookrightarrow\te{Corr}(\mathcal{C},E)^\otimes $ induces equivalences of categories 
    $$ \textup{BCFun}(\mathcal{C},E, I, P) \simeq \textup{2FF}(\mathcal{C},E,I,P),$$ $$ \textup{BCFun}^\lax(\mathcal{C},E, I, P)  \simeq \textup{3FF}(\mathcal{C},E,I,P) $$ and     $$ \textup{BCFun}^{\lax, L}(\mathcal{C},E, I, P)  \simeq \textup{6FF}(\mathcal{C},E,I,P). $$
\end{theorem}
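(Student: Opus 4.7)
The plan is to prove the 2FF statement in detail and derive the 3FF and 6FF versions by restricting both sides to the appropriate subcategories — the Liu-Zheng construction preserves the lax cartesian structure (which already lives at the level of $\mathcal{C}^{\op,\sqcup}$) and the property of landing in $\te{Cat}_\infty^L$, so this restriction is compatible. I will construct the restriction functor $\rho$ and exhibit an inverse produced by the Liu-Zheng machine.

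For well-definedness of $\rho$ on objects, given a Nagata 2FF $\mathcal{D}$, conditions (1) and (2) of Definition \ref{definition:BC functors} follow directly from Corollary \ref{cor:more_adjoinable_squares}: the vertically right-adjoinable square for $p\in P$ is obtained as the right-adjoint of the base-change isomorphism supplied by functoriality on $\Corr(\mathcal{C},E)$, using $p_*\cong p_!$; symmetrically for $j\in I$. Condition (3), the isomorphism $f_\natural \bar{g}_* \cong g_* \bar{f}_\natural$ for $f\in I$ and $g\in P$ in a pullback square, reduces under the identifications $(-)_\natural \cong (-)_!$ on $I$ and $(-)_* \cong (-)_!$ on $P$ to the tautological equality $(f\bar{g})_! \cong (g\bar{f})_!$ coming from commutativity in $\Corr(\mathcal{C},E)$. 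Well-definedness of $\rho$ on morphisms is precisely Proposition \ref{prop:natural_transformation}.

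For essential surjectivity, given $\mathcal{D}^*\in \textup{BCFun}(\mathcal{C},E,I,P)$ I apply the Liu-Zheng extension theorem \cite[Theorem 5.4]{LZb}, packaged as in \cite[Proposition A.5.10]{mann_thesis}: the three BCFun conditions are precisely the Beck-Chevalley hypotheses required, and the construction produces a functor $\mathcal{D}:\Corr(\mathcal{C},E)\to \te{Cat}_\infty$ extending $\mathcal{D}^*$, with $f_!\cong \bar{f}_*j_\natural$ for any factorization $f = \bar{f}j$, $j\in I$, $\bar{f}\in P$. The trivial factorizations $p = p\circ \te{id}$ and $j = \te{id}\circ j$ give $p_!\cong p_*$ for $p\in P$ and $j_!\cong j_\natural$ for $j\in I$, installing the adjunctions $p^*\dashv p_!$ and $j_!\dashv j^*$.

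The main obstacle, which I expect to carry the bulk of the technical weight, is verifying that this extension is Nagata. By Proposition \ref{Different_definitions} and Lemma \ref{lem:stably}, it suffices to show each $p\in P$ stably satisfies (Pr-II) and each $j\in I$ stably satisfies (\'Et-I). I would proceed by induction on the truncation level $n\geq -2$: the base case is automatic since a $-2$-truncated morphism is an equivalence. For the inductive step, the Nagata axioms force $\Delta_p\in P$ to be $(n-1)$-truncated, so $\Delta_{p,!}\cong \Delta_{p,*}$ by induction; the required vertical right-adjoinability of the square
\begin{center}
\begin{tikzcd}
\mathcal{D}(X) \arrow[r, "p_!"] \arrow[d, "p_2^*"] & \mathcal{D}(Y) \arrow[d, "p^*"]\\
\mathcal{D}(X \times_Y X)\arrow[r, "p_{1,!}"] & \mathcal{D}(X)
\end{tikzcd}
\end{center}
then reduces, via the identifications $p_!\cong p_*$ and $p_{1,!}\cong p_{1,*}$ (both $p$ and $p_1$ lying in $P$), to BCFun condition (1) applied to the pullback of $p$ along itself. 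Stability under arbitrary base change is preserved because $P$ and $I$ are closed under pullback; the \'etale case is symmetric, invoking BCFun condition (2). Finally, fully faithfulness of $\rho$ follows by combining Proposition \ref{prop:natural_transformation} with the universal property of the Liu-Zheng construction — a natural transformation of Nagata 2FFs is determined by its restriction to $\mathcal{C}^\op$ together with the adjoinability data on $I$ and $P$ squares, and conversely every BCFun morphism extends uniquely under Liu-Zheng. The conceptual insight making the whole argument go through is that the Beck-Chevalley reformulation of cohomologically proper/\'etale in Definition \ref{defn:Pr_Et} speaks precisely the language of the Beck-Chevalley hypotheses of Liu-Zheng, so matching them up — while technically intricate — is conceptually natural.
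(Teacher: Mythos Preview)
Your overall strategy matches the paper's: both rely on the Liu--Zheng machinery together with the Beck--Chevalley reformulation of cohomologically proper/\'etale from Section~\ref{sect:cohom_proper_etale}. However, there is a genuine gap in your treatment of full faithfulness, and a related imprecision in how you invoke Liu--Zheng.

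You cite \cite[Proposition~A.5.10]{mann_thesis} for essential surjectivity and then appeal to ``the universal property of the Liu--Zheng construction'' for full faithfulness. But Mann's statement is an \emph{existence} result; it does not assert that the construction is an equivalence of categories or that morphisms extend uniquely. The paper avoids this problem by never separating the argument into essential surjectivity plus full faithfulness. Instead it writes down a chain of \emph{equivalences} of functor categories: the categorical equivalences $\phi^*$ and $\psi^*$ coming from \cite[Theorem~5.4]{LZb}, and the passage-to-adjoints equivalences $PA_{\{2\}}$, $PA_{\{3\}}$ from \cite[Theorem~2.3.2]{6ff}. One then checks that $\textup{BCFun}$ maps into $\textup{2FF}$ under the composite, and that $\textup{2FF}$ maps back into $\textup{BCFun}$ under the inverse composite (using Corollary~\ref{cor:more_adjoinable_squares} and Proposition~\ref{prop:natural_transformation}). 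Full faithfulness is then automatic. Your proposal needs exactly this input: the claim that a BCFun morphism ``extends uniquely under Liu--Zheng'' is precisely the content of these equivalences, and you should cite them rather than an unspecified universal property.

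Two secondary points. First, your inductive verification that the Liu--Zheng output is Nagata is not how the paper proceeds: in the paper it holds ``by construction'' because the passage-to-adjoints $PA_{\{3\}}^{-1}$ literally produces $p_!$ as the right adjoint of $p^*$, so (Pr-I) is built in. Your direct argument is plausible but you are silently assuming that the isomorphisms $p_!\cong p_*$ and $p_{1,!}\cong p_{1,*}$ identify the $\Corr$-base-change square with the BCFun(1) square on the nose --- a coherence statement that is true by construction of Liu--Zheng, but not literally ``BCFun condition (1) applied to the self-pullback''. Second, your verification of condition~(3) (``reduces to the tautological equality $(f\bar g)_!\cong (g\bar f)_!$'') has the same issue: you need the specific Beck--Chevalley map $f_\natural\bar g_*\to g_*\bar f_\natural$ to become the functoriality isomorphism under the identifications, which requires the mechanism of Corollary~\ref{cor:more_adjoinable_squares} applied once on the $I$-side and once on the $P$-side, not just that the two composites are abstractly isomorphic.
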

\begin{proof}
We start by proving the first equivalence.
By applying \cite[Theorem 5.4]{LZb} twice, first with respect to the classes of morphisms $All^\op$ and $P^\op$, and then with respect to the classes of morphisms $All^\op$ and $I^\op$, we obtain categorical equivalences 
$$\delta_3^*\C(All^\op, I^\op,P^\op) \xrightarrow[\sim]{\phi_0} \delta_2^*\C(All^\op,I^\op) \xrightarrow[\sim]{\phi_1} \C^\op. $$
By condition 1, under the equivalence
$$ \phi^*:\Fun(\C^\op,\cat) \xrightarrow{\sim} \Fun(\delta_3^*\C(All^\op, I^\op,P^\op), \Cat_\infty) $$
induced by $\phi:= \phi_1\circ \phi_0$, the category
$\textup{BCFun}(\mathcal{C},E, I, P)$ is equivalent to a subcategory of $\Fun^{\{2\},R}(\delta_3^*\C(All^\op, I^\op,P^\op), \Cat_\infty)$ (see \cite[Section 2.4]{6ff} for the notation). By \cite[Theorem 2.3.2]{6ff}, there is an equivalence 
$${PA_{\{2\}}}: \Fun^{\{2\},R}(\delta_3^*\C(All^\op, I^\op,P^\op), \Cat_\infty) \xrightarrow{\sim} \Fun^{\{2\},L}(\delta_3^*\C(All^\op, I,P^\op), \Cat_\infty).$$
By conditions 2 and 3, under this equivalence $\psi^*(\textup{BCFun}(\mathcal{C},E, I, P))$ actually lands in a subcategory of 
$\Fun^{\{3\},L}(\delta_3^*\C(All^\op, I,P^\op), \Cat_\infty)$. 
After applying the equivalence
$${PA_{\{3\}}^{-1}} : \Fun^{\{3\},L}(\delta_3^*\C(All^\op, I,P^\op), \Cat_\infty) \xrightarrow{\sim}\Fun^{\{3\},R}(\delta_3^*\C(All^\op, I,P), \Cat_\infty),$$
we obtain a subcategory of 
$\Fun^{\{3\},R}(\delta_3^*\C(All^\op, I,P), \Cat_\infty)$. Lastly, by \cite[Theorem 5.4 and Theorem 4.27]{LZb}, there is an equivalence 
$$(\psi^*)^{-1}:\Fun(\delta_3^*\C(All^\op, I,P), \Cat_\infty) \xrightarrow{\sim} \Fun(\Corr(\C,E), \cat)$$
obtained by precomposing with the canonical categorical equivalence
$\psi^*:\delta_3^*\C(All^\op, I,P)$ $\to \Corr(\C,E)$.
 Keeping track of what happens to conditions 1-3 and the condition of morphisms when applying each of the equivalences above, we see that the image of \linebreak
 $\textup{BCFun}(\mathcal{C},E, I, P)$ in $\Fun(\Corr(\C,E), \cat)$ is in fact contained in $\textup{2FF}(\C,E,I,P)$, since by construction, for $\D$ in this image of $(\psi^*)^{-1}\circ PA_{\{3\}}^{-1}\circ PA_{\{2\}} \circ \phi^*$, morphisms in $P$ satisfy (Pr-I), and morphisms in $I$ satisfy (Ét-I). 

To show that the embedding
\begin{equation}\label{eq:conjecture}
 (\psi^*)^{-1}\circ PA_{\{3\}}^{-1}\circ PA_{\{2\}} \circ \phi^* : \textup{BCFun}(\mathcal{C},E, I, P) \longrightarrow \textup{2FF}(\C,E,I,P) 
\end{equation}
is in fact full and essentially surjective, and therefore an equivalence, we note that\linebreak 
$\psi^*(\textup{2FF}(\C,E,I,P))$ is a subcategory of $\Fun^{\{3\},R}(\delta_3^*\C(All^\op, I,P), \Cat_\infty),$ because of Proposition \ref{cor:more_adjoinable_squares} and Proposition \ref{prop:natural_transformation}. Moreover, its image after applying $PA_{\{3\}}$ is contained in  $\Fun^{\{2\},L}(\delta_3^*\C(All^\op, I,P^\op), \Cat_\infty)$, so we can apply $(\phi^*)^{-1}\circ PA_{\{2\}}^{-1}$. By construction, the image of $\textup{2FF}(\C,E,I,P)$ lands in $\textup{BCFun}(\mathcal{C},E, I, P)$. Therefore we get an embedding
$$(\phi^*)^{-1}\circ PA_{\{2\}}^{-1} \circ PA_{\{3\}} \circ \psi^*: \textup{2FF}(\C,E,I,P) \hookrightarrow  \textup{BCFun}(\mathcal{C},E, I, P) $$
it is the inverse of (\ref{eq:conjecture}) by definition. Lastly, we observe that restricting along $C^\op \hookrightarrow \Corr(\C,E)$ is clearly a left-inverse of $(\psi^*)^{-1}\circ PA^{-1}_{\{3\}} \circ PA_{\{2\}} \circ \phi^*$, and therefore equal to $(\phi^*)^{-1}\circ PA_{\{2\}}^{-1} \circ PA_{\{3\}} \circ \psi^*$.

For a Nagata setup of the form $(\C^{\op,\sqcup,\op},E_-,I_-,P_-)$, the functors $\phi^*$, $PA_{\{2\}}$, $PA_{\{3\}}^{-1}$ and $\psi^*$ and their inverses all preserve the existence of equivalences
$$\D((X_i)_I) \to \prod_I \D(X_i) $$
for $(X_i)_I$ in $\C^{\op,\sqcup, \op}$. Therefore (\ref{eq:conjecture}) restricts to an equivalence
$$ \textup{BCFun}^\lax(\mathcal{C},E, I, P)  \simeq \textup{3FF}(\mathcal{C},E,I,P).$$

Let $F$ be in $\textup{BCFun}^{\lax}(\mathcal{C},E, I, P)$, and let $\D$ be the corresponding three-functor formalism. First, suppose $F$ is in $\textup{BCFun}^{\lax,L}(\mathcal{C},E, I, P)$. $\D(X)$ remains closed monoidal for all $X$ in $\C$. For $i$ in $I$, there are adjunctions $i_\natural \vdash i^*\vdash i_*$, and for $p$ in $P$, there are adjunctions $p^* \vdash p_* \vdash p^\natural$. Moreover, since $\D$ is Nagata, there are adjunctions $i_!\vdash i^*$ and $p^* \vdash p_!$. Now for $f$ in $\C$ arbitrary, we can write $f=p\circ i$ for some $i$
 in $I$ and some $p$ in $P$. Then $f^* = i^* \circ p^*$ is the composite of left adjoints and hence a left adjoint. On the other hand, $f_! = i_! \circ f_!$ is the composite of left adjoints and hence a left adjoint. Therefore $\D$ is a Nagata six-functor formalisms.
 
 On the other hand, suppose $\D$ is a Nagata six-functor formalism. Then it is clear that $F$ satisfies (a) and (b). Moreover, for $f$ in $P$, $f^*$ has a right adjoint by (Pr-II). This shows that $F$ is in $\textup{BCFun}^{\lax,L}(\mathcal{C},E, I, P)$. Hence, (\ref{eq:conjecture}) restricts to an equivalence
 $$ \textup{BCFun}^{\lax,L}(\mathcal{C},E, I, P)  \simeq \textup{6FF}(\mathcal{C},E,I,P).$$
\end{proof}

\subsection{Comparison to coefficient systems}
\label{subsect:coeff}
For $\C$ the category $\textup{Sch}_B$ of finite type schemes over a finite-dimensional noetherian base scheme $B$, $E$ the separated morphisms, $P\subseteq E$ the proper morphisms and $I\subseteq E$ the étale morphisms, we can compare the category  $\textup{BCFun}^{\lax}(\C,E,I,P)$ to the category of coefficient systems in \cite[Definition 7.5]{drew_gallauer}. Coefficient systems are defined to encode $f^*$ and $f_*$ for all morphisms in $\C$, as well as a closed symmetric monoidal category $C(X)$ for all $X$ in $\C$. Base change and projection for smooth morphisms are imposed, as well as localization, $\mathbb{A}^1$-invariance and Tate-stability, and morphisms of coefficient systems are compatible with the left adjoint of $f^*$ for $f$ smooth. By \cite[Proposition 5.21]{drew}, morphisms between coefficient systems are also compatible with the right adjoint $f_*$ for $f$ proper. Moreover, coefficient systems satisfy proper base change, since this can be deduced from localization and smooth base change, see see \cite[Remark 3.4]{gallauer}. Lastly, condition 3 in Definition \ref{definition:BC functors}, also known as the support property, can be deduced from localization and proper base change, see \cite[Lemma 2.3.12]{Cisinski_deglise}.
    Therefore $\textup{CoSy}$ as defined in \cite[Definition 7.5]{drew_gallauer} is a full subcategory of  $\textup{BCFun}^{\lax}(\C,E,I,P)$. Let denote by $\textup{CoSy}^{L} \subseteq \textup{CoSy}$ the category of coefficient systems $C$ such that $C(X)$ is a cocomplete closed symmetric monoidal stable $\infty$-category for all $X$, and $f_*$ is a left adjoint for all $f$ in $P$, we call these \textit{left coefficient systems}.  Then $\textup{CoSy}^{L}$ is the subcategory of $\textup{BCFun}^{\lax,L}(\C,E,I,P)$ spanned by functors
    $\D:\C^\op \to \cat $ for which in addition
    \begin{itemize}
        \item for every $X$ in $\C$, $\D(X)$ is stable and cocomplete,
          \item for all smooth morphisms, $f^*$ has a left adjoint $f_\sharp$ satisfying base change and the projection formula,
          \item  conditions (3) and (4) in \cite[Definition 7.5]{drew_gallauer} hold,
    \end{itemize}
    and by natural transformations that are compatible with $f_\sharp$ for $f$ smooth.
    
Therefore Theorem \ref{theorem:scholzes conjecture} specializes to the following. 
\begin{theorem}
    Restriction gives an equivalence between  the category of coefficient systems $\textup{CoSy}^{L}$, and the subcategory spanned Nagata six-functor formalisms such that 
    \begin{itemize}
       \item for every $X$ in $\C$, $\D(X)$ is cocomplete,
          \item for all smooth morphisms, $f^*$ has a left adjoint $f_\sharp$ satisfying base change and the projection formula,
          \item  conditions (3) and (4) in \cite[Definition 7.5]{drew_gallauer} hold,
    \end{itemize}
    and natural transformations that are compatible with $f_\sharp$ for $f$ smooth.
\end{theorem}
Let us denote the full subcategory of $\textup{6FF}(\C,E,I,P)$ in the statement of the theorem above, by $\textup{6FF}(\C,E,I,P)^{DG}$. Note that $\textup{CoSy}^{L}$ is a full subcategory of $\textup{CoSy}^c$, and contains $\mathcal{SH}$. The theorem above, together with \cite[Theorem 7.14]{drew_gallauer}, implies the following.
\begin{corollary}
    The six-functor formalism $\mathcal{SH}$ is initial in  $\textup{6FF}(\C,E,I,P)^{DG}$.
\end{corollary}

Now, let us replace $\Sch_B$ in the definition of $\textup{CoSy}$ by the category $\Sch^{\textup{rf}}_B$ of reduced finite type schemes over a finite dimensional Noetherian base scheme, and separated morphisms between them. Then $(\Sch^{\textup{rf}}_B,I,P)$ is a Noetherian Nagata setup, see \cite[Definition A.1.12]{6ff}. Let us denote by $\textup{CoSy}^{\textup{Pr}} \subseteq \textup{CoSy}^L$ the category of left coefficient systems such $C(X)$ is presentable for all $X$, we call these presentable left coefficient systems. Because of condition (3) in \cite[Definition 7.5]{drew_gallauer}, $\textup{CoSy}^\textup{Pr}$ is in fact equivalent to a subcategory of $\mathbf{6FF}^\textup{loc}$. Therefore by \cite[Theorem 4.3.14]{6ff}, we get the following.
\begin{corollary}
   Restriction induces a faithful embedding of $\textup{CoSy}^\textup{Pr}$ into the category of lax symmetric monoidal functors
   $$(\textup{Prop}^{\textup{rf}}_B)^\op \to {\Pr}^L $$
   that satisfy descent for abstract blowups, where $\textup{Prop}^{\textup{rf}}_B  \subseteq \Sch^{\textup{rf}}_B$ is the subcategory of schemes that are proper over $B$. 
   
   In the case that $B = \textup{Spec}\ k$ for $k$ a field of characteristic zero,  restriction induces a faithful embedding of $\textup{CoSy}^\textup{Pr}$ into the category of lax symmetric monoidal functors
   $$(\textup{SmProp}^{\textup{rf}}_B)^\op \to {\Pr}^L $$
   that satisfy descent for abstract blowups, where $\textup{SmProp}^{\textup{rf}}_B  \subseteq \Sch^{\textup{rf}}_B$ is the subcategory of schemes that are smooth and proper over $k$.  
\end{corollary}

\subsection{Comparison to weaves}\label{subsect:khan}
    Let $\mathcal{D}:\Corr(\C,E)^\otimes \to \cat$ be a three-functor formalism. Then a morphism $f\in E$ satisfies (Pr-II) stably if and only if $f$ satisfies conditions (Pr1)-(Pr4) in \cite[Definition 2.18]{khan}; and $f$ satisfies (Ét-II) stably if and only if $f$ satisfies conditions (Sm1)-(Sm4) in \cite[Definition 2.30]{khan}. We can therefore compare our set-up to that in \cite{khan} in the following case. Suppose the category $\S$ in loc. cit. is sufficiently reasonable, in the sense that there is a Nagata set-up $(\S,E,I,P)$ with $E$ the set of morphisms that are in $\S'$, where the morphisms that we call ``proper'' form the class $P$, and the morphisms that we call ``étale'' are $I$. Suppose moreover that the morphisms that are called ``smooth'' in loc. cit. form a class $S\subseteq E$ that contains $I$.
    
    In this case, for a (left) preweave $\D:\Corr(\S,E) \to \cat$, to check that $\D$ admits $*$-direct images for proper morphisms (i.e., morphisms in $P$ satisfy conditions (Pr1)-(Pr4)), it suffices to check only conditions (Pr1) and (Pr4), since these imply that morphisms in $P$ satisfy (Pr-II) and hence also (Pr2) and (Pr3). Similarly, to check that $\D$ admits $\sharp$-direct images for morphisms in $I$, it suffices to check that morphisms in $I$ satisfy conditions (Sm1) and (Sm4) (since morphism in $I$ are closed under diagonal, condition (Sm5) is redundant, cf. \cite[Remark 2.34]{khan}). 
    
    For $\S$ as above, the category of left weaves (resp. weaves) is a subcategory of $3FF(\S,E,I,P)$ (resp. $6FF(\S,E,I,P)$), more precisely the subcategory of Nagata three-functor formalisms (resp. Nagata six-functor formalisms) such that for morphisms in $S$, the conditions (Sm1)-(Sm5) hold. 
    On the other hand, the category of $(*,\sharp,\otimes)$-formalisms (resp. adjoinable $(*,\sharp,\otimes)$-formalisms) is a subcategory of $\textup{BCFun}^\lax(\S,E,I,P)$ (resp. $\textup{BCFun}^{\lax,L}(\S,E,I,P)$), of one assumes that  $(*,\sharp,\otimes)$-formalism satisfy the support property, and that for an adjoinable $(*,\sharp,\otimes)$-formalism, $f_*$ is a left adjoint when $f$ is proper. Then the (adjoinable) $(*,\sharp,\otimes)$-formalisms are exactly the BC-functors such that not only for morphisms in $I$, but for all morphisms in the larger class $S$, conditions 2 and 3 in Definition \ref{definition:BC functors} hold, and such that morphisms in $S$ satisfy (Sm5). 
The equivalence in Theorem \ref{theorem:scholzes conjecture} then restricts to the following.

\begin{theorem}[{\cite[Theorem 2.51]{khan}}]
    For $\mathcal{S}$ as above, restriction induces an equivalence between the $\infty$-category of left weaves (resp. weaves) to the $\infty$-category of (resp. adjoinable) $(*,\sharp, \otimes)$-formalisms that satsify proper base change, projection and Thom stability.
\end{theorem}
The merit of this proof is that it uses only the machinery of Liu and Zheng and no $(\infty,2)$-categories; see also \cite[Warning 2.53]{khan}.

	\section{Relation to $ K $-theory}\label{sect:K-theory}
In Definition \ref{defn:nagata_setup}, we imposed the condition that for a Nagata set-up $(\C,E,I,P)$, all morphisms in $E$ need to be $n$-truncated for some $n$. In this section we exhibit an explicit counter-example, showing that this condition is necessary. This leads to the notion of \textit{twists of three-functor formalisms}, which are conjecturally classified by an invariant closely related to $K$-theory.
	
		\begin{example}[Twists]\label{Euler-characteristics}
			Let $ \te{FinAni}^\te{op}$ be the free $ \infty $-category with finite limits on a point, i.e., the opposite category of finite anima or equivalently of the homotopy category of finite CW complexes. Define
				\begin{align*}
					\chi: \te{Corr}(\te{FinAni}^\te{op})^\otimes\to \textbf{B}\mathbb{Z}^\otimes
				\end{align*}
			as the composition 
				\begin{align*}
					\te{Corr}(\te{FinAni}^\te{op})^\otimes\to \te{h}\te{Corr}(\te{FinAni}^\te{op})^\otimes\overset{\te{h}\chi}{\longrightarrow}\textbf{B} \mathbb{Z}^\otimes
				\end{align*}
			where $ \te{h}\chi $ is the strict\footnote{I.e. the maps $F(a)\otimes F(b)\to F(a\otimes b) $ and $1\to F(1)$ are identity morphisms.} symmetric monoidal $ 1 $-functor given explicitly as 
				\begin{enumerate}
					\item sending any object $ C $ of $ \mathcal{C} $ to the point $ \te{pt}$\footnote{The unique object in $ \textbf{B}\mathbb{Z} $.};
					\item sending an equivalence class of spans $ C\leftarrow E \to D $ (which is really a cospan $ C\to E\leftarrow D $ of finite CW complexes) to $ \chi(D)-\chi(E) $ the difference of Euler-characteristics of any (and thus all) representatives;
					\item this is a functor as it sends $ \te{id}_C\mapsto \chi(C)-\chi(C)=0=\te{id}_\te{pt}$ and a composition of spans corresponding to a composition of cospans of finite CW complexes 
						\begin{center}
							\begin{tikzcd}
								&                          & F\cup_D^\te{h} G                    &                          &                     \\
								& F \arrow[ru, "\bar{f}'"] &                                     & G \arrow[lu, "\bar{g}"'] &                     \\
								C \arrow[ru, "f"] &                          & D \arrow[lu, "g"'] \arrow[ru, "f'"] &                          & E \arrow[lu, "g'"']
							\end{tikzcd}
						\end{center}
					to $\chi(E)-\chi(F\cup_D^\te{h}G)= \chi(E)-(\chi(F)+\chi(G)-\chi(D))=(\chi(D)-\chi(F))+ (\chi(E)-\chi(G)) $;
					\item to see it is strong monoidal, it suffices to check that for cospans $A\to B\leftarrow C$ and $A'\to B'\leftarrow C'$,  $ \chi(C\sqcup C' )-\chi(B\sqcup B')=(\chi(C)-\chi(B))+(\chi(C')-\chi(B')) $.
				\end{enumerate}
			Let $ \mathcal{D}^\otimes $ be any symmetric monoidal $ \infty $-category with a strictly invertible object $ \mathcal{L} $ i.e. all braiding isomorphisms $ \mathcal{L}^{\otimes n } \to \mathcal{L}^{\otimes n} $ are identity maps.\footnote{For example, let $ R $ be an (underived) commutative ring and $ I $ be an invertible ideal. Then $ \mathcal{D}^\otimes=\te{Mod}_R $, the derived $ \infty $-category of $ R $-modules with $ \mathcal{L}=I[0] $ fits the bill.} Then $ \mathcal{L} $ induces a map $ \textbf{B}\mathbb{Z}^\otimes\to \te{Cat}_\infty$ sending $ \te{pt}\mapsto \mathcal{D}^\otimes $ and $ n\mapsto (-\otimes \mathcal{L}^{\otimes n}: \mathcal{D}\to \mathcal{D}) $. The composition 
				\begin{align*}
					\mathcal{D}_\mathcal{L}:\te{Corr}(\te{FinAni}^\te{op})^\otimes\to \textbf{B}\mathbb{Z}^\otimes\to \te{Cat}_\infty
				\end{align*}
			is a six-functor formalism whose restriction $ \mathcal{D}_\mathcal{L}^*: \te{FinAni}\to \te{CAlg}(\te{Cat}_\infty) $ factors over $ \{\te{pt}\}\overset{\mathcal{D}^\otimes}{\to} \te{CAlg}(\te{Cat}_\infty) $. The Liu-Zheng construction (regardless of what $ I $ and $ P $ you choose) attaches to this the constant six-functor formalism
				\begin{align*}
					\mathcal{D}: \te{Corr}(\te{FinAni}^\te{op})^\otimes\to \te{N}(Fin_*)\overset{\mathcal{D}^\otimes}{\to} \te{Cat}_\infty.
				\end{align*}
			 This is not equivalent to $ \mathcal{D}_\mathcal{L} $. But $ \mathcal{D}_\mathcal{L} $ satisfies all conditions of Definition \ref{defn:nagata}, besides that no morphism, that is not an equivalence, is $ n $-truncated in $ \te{FinAni}^\te{op} $ for any $ n\geq -1 $.
		\end{example}
		\begin{remark}
			The functor $ \chi: \te{Corr}(\te{FinAni}^\te{op})^\otimes\to \textbf{B}\mathbb{Z}^\otimes $ factors over the geometric realisation $ | \te{Corr}(\te{FinAni}^\te{op})^\otimes | $. It is noteworthy, that both restrictions $\chi^*: \te{FinAni}\to \textbf{B}\mathbb{Z} $ and $ \chi_!: \te{FinAni}^{\te{op}}\to \textbf{B}\mathbb{Z} $ are (isomorphic) to a constant map. For $ \chi^*$ it is literally true. There is a natural equivalence $ \chi_!\to c_\te{pt} $ to the constant functor with components $ \chi (M): \te{pt}=\chi_!(M)\to c_\te{pt}(M)=\te{pt} $.
			One can also see this from general principles. As they have an initial resp. terminal object, $ |\te{FinAni}|\simeq |\te{FinAni}^\te{op}|$ are contractible. But $ |\te{Corr}(\te{FinAni}^\te{op})| $ is not contractible!  

 Note that $ \delta_2^*\te{FinAni}^{\te{op}, \te{cart}}\to \te{Corr}(\te{FinAni}^\te{op}) $ is a categorical equivalence, in particular it induces a homotopy equivalence $  |\te{FinAni}^{\te{op}, \te{cart}}|= | \delta_2^*\te{FinAni}^{\te{op}, \te{cart}}\ |\simeq  | \te{Corr}(\te{FinAni}^\te{op})|$.
	
		\end{remark}
			
			 This leads to the following $ K $-theory-like invariant:
		\begin{definition}[$ T $-theory]
			Let $ \te{Cat}^\te{lex}_\infty $ be the $ \infty $-category of $ \infty $-categories with finite limits and left-exact functors between them. Define $ T $-theory
				\begin{align*}
					T: \te{Cat}^\te{lex}_\infty\to \te{Sp}_{\geq 0}
				\end{align*}
			to be the functor sending $ \mathcal{C}\mapsto \Omega|\te{Corr}(\mathcal{C})^\otimes| $, which is naturally a connective spectrum as $ \te{Corr}(\mathcal{C}) $ is connected. More precisely, the functor $ |\te{Corr}(-)^\otimes|: \te{Cat}^\te{lex}_\infty\to \te{CAlg}(\te{Ani}) $ factors uniquely as
                \begin{align*}
                    \te{Cat}^\te{lex}_\infty\overset{T}{\to}\te{Sp}_{\geq 0}\overset{\textbf{B}}{\to}\te{CAlg}(\te{Ani}).
                \end{align*}
		\end{definition}

        \begin{remark}
            This construction is well known in case $ \mathcal{C} $ is an exact $ \infty $-category. It is called the $ Q $-construction \cite{barwick_rognes} and there is a homotopy equivalence
                \begin{align*}
                    \Omega |\te{Corr}(\mathcal{C})|\simeq K(\mathcal{C})
                \end{align*}
            with algebraic $ K $-theory.
        \end{remark}
        
        \begin{remark} By \cite[Example 4.30.]{LZb} we have that $\delta_2^* \mathcal{C}^\te{cart}\to \te{Corr}(\mathcal{C})$ is a categorical equivalence, so, in particular a weak homotopy equivalence proving $ |\te{Corr}(\mathcal{C})|\simeq |\mathcal{C}^\te{cart}|$.
            The bisimplicial geometric realisation $ |\mathcal{C}^\te{cart}| $ can alternatively be computed as the colimit
                \begin{align*}
                    \varinjlim_{[n]\in \Delta^{\te{op}}} |\te{Fun}^{\te{cart}}(\Delta^n, \mathcal{C})^{ \otimes}|
                \end{align*}
            of the geometric realisations of the symmetric monoidal $\infty$-categories $ \te{Fun}^\te{cart}(\Delta^n, \mathcal{C})^{ \otimes}\subseteq \te{Fun}(\Delta^n, \mathcal{C})^\times $ whose maps are natural transformations consisting of cartesian squares.
        \end{remark}


        \begin{example}\label{bad_signs}
            We can already construct a (slightly less natural looking) $ 6 $-functor formalism, that satisfies the conditions of Theorem \ref{theorem:scholzes conjecture} and satisfies $ f^*\dashv f_! $ for all maps. Take $ \mathcal{C}=\mathcal{D}^\te{perf}(\mathbb{F}_3) $. It is well known, that $ K(\mathbb{F}_3)=\mathbb{Z}\times \textbf{B}U^{\te{h}\psi_3} $, where $\psi_3: \textbf{B}U\to \textbf{B}U$ is the third Adams operation. This induces a map\footnote{The map $\det: \textbf{B}U\to\textbf{B}U(1) $ is compatible with the third Adams operations, which on $\textbf{B}U(1)$ is given by $[L]\mapsto [L^3]$. $\textbf{B}U(1)^{\te{h}\psi_3}$ is thus computed by the fiber sequence $\textbf{B}\mathbb{Z}/2\to \textbf{B}U(1)\overset{[L]\mapsto [L^2]}{\longrightarrow} \textbf{B}U(1)$. }
                \begin{align*}
                    \te{Corr}(\mathcal{D}^\te{perf}(\mathbb{F}_3))^\otimes \to \textbf{B}K(\mathbb{F}_3)^\otimes\simeq \textbf{B}&(\mathbb{Z}\times \textbf{B}U^{\te{h} \psi_3})^\otimes\\
                     & \overset{pr_2}{\to} \textbf{B}^2(U^{\te{h}\psi_3})^\otimes\overset{\det}{\to} \textbf{B}^2 (U(1)^{\te{h}\psi_3})^\otimes \simeq \textbf{B}^2\mathbb{Z}/2^\otimes.
                \end{align*}
            Let $ \mathcal{D}^\otimes $ be a symmetric monoidal closed $ \infty $-category together with an isomorphism $ \phi: 1_\mathcal{D}\to 1_\mathcal{D} $ strictly of order $ 2 $. We obtain a six-functor formalism
                \begin{align*}
                    \mathcal{D}_\phi: \te{Corr}(\mathcal{D}^\te{perf}(\mathbb{F}_3))^\otimes \to \textbf{B}^2\mathbb{Z}/2^\otimes \to \te{Cat}_\infty.
                \end{align*}
        \end{example}
        \begin{remark}
            As $K(\mathbb{F}_3)$ is non-trivial in infinitely many degrees, even asking for base-change $n$-isomorphisms for arbitrary globally bounded $n$ to come from the adjunction $f^*\dashv f_!$ is insufficient to pin down the three-functor formalism. Asking it for all $n$ tautologically pins it down, but that's not much of a theorem.\footnote{In fact, \cite[Theorem 5.4]{LZb} directly implies, that a two-, three- or six-functor formalism is uniquely determined, if we fix \emph{all} data for maps in $ I $ and all data for maps in $ P $. This works even if only maps in $ I\cap P $ are $ n $-truncated for some $ n $, possibly depending on the map.}
        \end{remark}


        \begin{definition}\label{def:twist}
            Let $\mathcal{C}$ be an $\infty$-category with finite limits. Define the \emph{universal twist}
                \begin{align*}
                    T_{\mathcal{C}}: \mathcal{C}^\te{op}\to \te{CAlg}(\te{Cat}_\infty)
                \end{align*}
            as the composition
                \begin{align*}
                    \mathcal{C}^\te{op}\overset{\mathcal{C}_{-/}}{\to} \te{Cat}^\te{lex}\overset{T}{\to} \te{Sp}_{\geq 0}\hookrightarrow \te{CAlg}(\te{Ani})\hookrightarrow \te{CAlg}(\te{Cat}_\infty). 
                \end{align*}
            The first functor assigns $ X \mapsto \mathcal{C}_{X/}$, the coslice category under $X$ and maps $ (X\to Z) \mapsto (Y\to X\to Z) $; it is the straightening of the domain cartesian fibration $\textup{Arr}(\C) \to \C$, see also \cite[Corollary 2.4.7.12]{HTT}. 
            
            A \emph{twist of a $ 3 $-functor formalism} $ \mathcal{D}: \te{Corr}(\mathcal{C})^\otimes\to \te{Cat}_\infty $ is a symmetric monoidal natural transformation
                \begin{align*}
                    \mathcal{L}: T_\mathcal{C}\to \mathcal{D}^*, 
                \end{align*}
            where $\mathcal{D}^*$ corresponds to $ \mathcal{C}^{\te{op}, \sqcup}\to \te{Corr}(\mathcal{C})^\otimes\overset{\mathcal{D}}{\to }\te{Cat}_\infty $.
        \end{definition}

        \begin{remark}\label{explanation:twists}
            Given a three-functor formalism $\mathcal{D}: \te{Corr}(\mathcal{C})^\otimes\to \te{Cat}_\infty$, a map $\Omega |(\mathcal{C}_{X/})^\te{cart}| \to \mathcal{D}(X)$ is equivalent to a symmetric monoidal map
                \begin{align*}
                    \mathcal{L}: \delta_2^*(\mathcal{C}_{X/})^\te{cart}\to \te{B}\,\te{Pic} \mathcal{D}(X),
                \end{align*}
            where $\te{Pic}$ denotes the symmetric monoidal sub-anima of $\otimes$-invertible objects and isomorphisms between them.
            This sends a cartesian square $S$ under $X$ i.e. a $1$-simplex in $\delta_2^*(\mathcal{C}_{X/})^\te{cart}$
                \begin{center}
                        \begin{tikzcd}
                            X  \arrow[rd] \arrow[rdd, bend right, "f"'] \arrow[rrd, bend left] &\\
                            & A \arrow[d, "p"] \arrow[r] & B \arrow[d] \\
                            & C \arrow[r]           & D          
                        \end{tikzcd}
                \end{center}     
            to an object $\mathcal{L}_{X/S}$. As they are compatible with composition of cartesian squares, this only depends on the morphism $C\to D$ under $X$, so we may denote it by $\mathcal{L}_{X/C/D}$.
            Using the functoriality in $X$, $\mathcal{L}_{X/ C/ D}= f^* \mathcal{L}_{C/C/D}$, so we will drop $X$  from notation and just write $\mathcal{L}_{C/D}$ for the latter. 
            $2$-simplices encode base-change isomorphisms $\mathcal{L}_{A/ B}\simeq p^*\mathcal{L}_{C/ D}$ for base-change squares and $\mathcal{L}_{A/B}\otimes g^*\mathcal{L}_{B/C}\simeq \mathcal{L}_{A/C}$ for compositions $A\overset{g}{\to} B\overset{h}{\to} C$. Higher simplices encode higher coherences between these.

            The fact that $\mathcal{L}$ is symmetric monoidal encodes K\"unneth isomorphisms $\mathcal{L}_{A\times A'/ B\times B'}\simeq \mathcal{L}_{A/B}\boxtimes \mathcal{L}_{A'/B'}$. This \emph{should} be enough data to define a three-functor formalism $\mathcal{D}_\mathcal{L}: \te{Corr}(\mathcal{C})\to \te{Cat}_\infty$ with the same $f^*$'s, but for $f:A\to B$, $f_{!'}:= f_!(-\otimes \mathcal{L}_{A/B})$.

            From this data one also can deduce, that $\mathcal{L}_{C/D}\simeq \mathcal{L}_{C/1}\otimes f^*\mathcal{L}_{D/1}^{-1}$.
        \end{remark}
        
        \begin{lemma}\label{lem:QuillenQ}
            Let $\mathcal{C}$ be an $\infty$-category with finite limits and $X$ an object of $\mathcal{C}$. There is an isomorphism
                \begin{align*}
                    K(\te{Stab}(\mathcal{C}_{ X/, /X}))\simeq T_\mathcal{C}(X)
                \end{align*}
            of connective spectra.\footnote{$\mathcal{C}_{X/, /X}$ denotes the slice category of the coslice category $ \mathcal{C}_{X/} $ at $\te{id}_X: X\to X$.}
        \end{lemma}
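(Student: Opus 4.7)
The plan is to combine the Barwick--Rognes identification $\Omega|\Corr(\mathcal{E})^\otimes|\simeq K(\mathcal{E})$ for a stable $\infty$-category $\mathcal{E}$ (noted in the remark above) with a stabilisation argument. Applied to $\mathcal{E}=\te{Stab}(\mathcal{C}_{X/,/X})$, this gives $K(\te{Stab}(\mathcal{C}_{X/,/X}))\simeq \Omega|\Corr(\te{Stab}(\mathcal{C}_{X/,/X}))^\otimes|$, so by definition of $T_\mathcal{C}(X)$ the task reduces to constructing a natural equivalence of connective spectra
$$\Omega|\Corr(\mathcal{C}_{X/})^\otimes| \;\simeq\; \Omega|\Corr(\te{Stab}(\mathcal{C}_{X/,/X}))^\otimes|.$$
I factor this comparison through the intermediate pointed category $\mathcal{C}_{X/,/X}$, which carries a genuine zero object $(X=X=X)$ lying over the initial object $\te{id}_X$ of $\mathcal{C}_{X/}$.

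For the first comparison I would analyse the forgetful functor $U:\mathcal{C}_{X/,/X}\to\mathcal{C}_{X/}$, which is a cartesian fibration preserving finite limits, hence induces a symmetric monoidal functor on correspondence categories. Using the description $|\Corr(\mathcal{A})^\otimes|\simeq|\mathcal{A}^{\te{cart}}|$ from the remark and computing based loops at the respective basepoints in the diagonal bisimplicial model, a cartesian $n$-simplex of $\mathcal{C}_{X/}$ whose combinatorial boundary collapses to $\te{id}_X$ canonically carries the retraction data needed to lift to a cartesian $n$-simplex of $\mathcal{C}_{X/,/X}$ over the zero object. This should give the equivalence $\Omega|\Corr(\mathcal{C}_{X/,/X})^\otimes|\xrightarrow{\sim}\Omega|\Corr(\mathcal{C}_{X/})^\otimes|$.

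For the second comparison the right adjoint $\Omega^\infty:\te{Stab}(\mathcal{C}_{X/,/X})\to\mathcal{C}_{X/,/X}$ preserves finite limits and the zero object, so also induces a symmetric monoidal functor on correspondence categories. Morally, $\Omega|\Corr(\mathcal{A})^\otimes|$ based at a zero object of a pointed $\infty$-category with finite limits is intrinsically a connective spectrum, and so only sees the linear infinitesimal structure of $\mathcal{A}$, which is precisely $\te{Stab}(\mathcal{A})$. Concretely, one would argue this using Lurie's description $\te{Stab}(\mathcal{A})\simeq\varprojlim(\cdots\xrightarrow{\Omega}\mathcal{A}\xrightarrow{\Omega}\mathcal{A})$ and verifying that $\Omega|\Corr(-)^\otimes|$ commutes with this limit in the connective range, or alternatively through a Waldhausen additivity-type reduction. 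Combining both equivalences with Barwick--Rognes yields the claim. The main obstacle is this second step: making the folklore identity $\Omega|\Corr(\mathcal{A})^\otimes|\simeq K(\te{Stab}(\mathcal{A}))$ for pointed $\mathcal{A}$ with finite limits into a precise statement, which is the heart of the Quillen $Q$-/Waldhausen $S_\bullet$-construction lore and likely requires a careful direct bisimplicial computation.
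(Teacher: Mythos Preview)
Your outline has the right shape but leaves both substantive steps unproved, and the decomposition you chose is less efficient than the paper's. The paper does not stay in the $Q$-construction world and compare three inputs $\mathcal{C}_{X/}$, $\mathcal{C}_{X/,/X}$, $\te{Stab}(\mathcal{C}_{X/,/X})$. Instead it proves a single $S$-versus-$Q$ comparison: it exhibits, for each $[n]$, an equivalence
\[
|\te{Fun}^{\simeq}(\Delta^{n-1},\mathcal{C}_{X/,/X})^\otimes|\;\simeq\;|\te{Fun}^{\te{cart}}(\Delta^{n},\mathcal{C}_{X/})^\otimes|
\]
by writing the left side as a pullback of the right side along the inclusion of the basepoint $\te{id}_X$ into $\mathcal{C}_{X/}$ under $\te{ev}_{[0]}$, and then applying Quillen's Theorem~A (the relevant comma categories are spaces of left Kan extensions, hence contractible). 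Taking the colimit over $\Delta^{\te{op}}$ and looping identifies the Waldhausen $K$-theory $K(\mathcal{C}_{X/,/X})$ with $T_\mathcal{C}(X)$. The stabilisation step $K(\mathcal{C}_{X/,/X})\simeq K(\te{Stab}(\mathcal{C}_{X/,/X}))$ is then a citation to Barwick.

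Your Step~1 (comparing $\Omega|\Corr|$ of $\mathcal{C}_{X/,/X}$ and of $\mathcal{C}_{X/}$) is true, but your sketch ``boundary collapses to $\te{id}_X$ so one can lift'' is exactly the content of the Theorem~A argument above and is not automatic from $U$ being a cartesian fibration. Your Step~2 is the real gap: Barwick--Rognes only identifies $\Omega|\Corr(\mathcal{E})^\otimes|$ with $K(\mathcal{E})$ for \emph{exact} (e.g.\ stable) $\mathcal{E}$, so you cannot invoke it for the merely pointed $\mathcal{C}_{X/,/X}$. Unwinding what you would need, $\Omega|\Corr(\mathcal{A})^\otimes|\simeq K(\te{Stab}(\mathcal{A}))$ for pointed $\mathcal{A}$ with finite limits, is precisely the lemma itself (specialised to categories with a zero object), so your Step~2 is circular as stated. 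The cleaner route is the paper's: pass once from $Q$ to $S$ to land directly in Waldhausen $K$-theory of the pointed category, and then cite the known stabilisation invariance of $K$.
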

        \begin{proof}
            We will construct an isomorphism of spectra between the Waldhausen S-construction for $ \mathcal{C}_{X/, /X} $ and the Quillen Q-construction for $ \mathcal{C}_{X/} $.
            
            This comparison will arise via a colax monoidal functor so we pass to opposite categories. Consider $\te{Fun}^{\te{cocart}}(\Delta^{n}, (\mathcal{C}_{X/})^\op)^\otimes$ $\subseteq \te{Fun}(\Delta^n, (\mathcal{C}_{X/})^\te{op})^\sqcup$ whose maps are natural transformations consisting of pushout squares. Further consider $\te{Fun}^\simeq(\Delta^{n-1}, (\mathcal{C}_{X/, /X})^\te{op})^\otimes\subseteq $ \linebreak
            $\te{Fun}(\Delta^{n-1}, (\mathcal{C}_{X/, /X})^\te{op})^\sqcup$ the maximal anima. 

            Together with the map $\te{N}(\te{Fin}_*)\to (\mathcal{C}_{X/})^{{\te{op}}, \sqcup}$ picking out $ (\te{id}_X: X\to X)\in (\mathcal{C}_{X/})^{{\te{op}}}\simeq $ \linebreak
            $\te{CAlg}((\mathcal{C}_{X/})^{{\te{op}}, \sqcup})$, they fit into a pullback square
            \begin{center}
                \begin{tikzcd}
                    \te{Fun}^\simeq(\Delta^{n-1}, (\mathcal{C}_{X/, /X})^\te{op})^\otimes \arrow[r]\arrow[d, hookrightarrow]& N(\te{Fin}_*)\arrow[d, hookrightarrow]\\
                    \te{Fun}^{\te{cocart}}(\Delta^{n}, (\mathcal{C}_{X/})^\op)^\otimes\arrow[r, "\te{ev}_{[0]}"]& (\mathcal{C}_{X/})^{\te{op}, \sqcup}
                \end{tikzcd}
            \end{center}
                
        of simplicial sets over $ \te{N}(\te{Fin}_*) $. This also implies, that $\te{Fun}^\simeq(\Delta^{n-1}, (\mathcal{C}_{X/, /X})^\te{op})^\otimes\to$ \linebreak
        $\te{Fun}^{\te{cocart}}(\Delta^{n}, (\mathcal{C}_{X/})^\op)^\otimes$ is lax symmetric monoidal. In particular, we get a map
            \begin{align*}
                |\te{Fun}^\simeq(\Delta^{n-1}, (\mathcal{C}_{ X/, /X})^\te{op})^\otimes|\to |\te{Fun}^{\te{cocart}}(\Delta^{n}, (\mathcal{C}_{X/})^\op)^\otimes|
            \end{align*}
        in $\te{CAlg}(\te{Ani})$. To check it is an isomorphism, we appeal to Quillen's Theorem A. Computing the comma-categories at objects in the target is simple as the functor is a fully-faithful embedding. Given $A_\bullet=(A_0\leftarrow A_1\leftarrow \cdots\leftarrow A_n) \in \te{Fun}^{\te{cocart}}(\Delta^{n}, (\mathcal{C}_{X/})^\op)$, we have to check that the full subcategory of $\te{Fun}^{\te{cocart}}(\Delta^{n}, (\mathcal{C}_{X/})^\op)_{A_\bullet\backslash}$ on $A_\bullet\to B_\bullet $ with $B_\bullet\in \te{Fun}^\simeq(\Delta^{n-1}, (\mathcal{C}_{ X/, /X})^\te{op})$ has contractible geometric realisation. It is already a contractible anima as it just the anima of left Kan-extensions of the diagram $\Delta^n\sqcup_{\te{ev}_{[0]}, \Delta^0, \te{ev}_{[0]}} \Delta^1\to \mathcal{C}_{X/}$ given by
            \begin{center}
                \begin{tikzcd}
                    A_0& \arrow[l] A_1 & \arrow[l] \cdots & A_n\arrow[l]\\
                    X\arrow[u]
                \end{tikzcd}
            \end{center}
         along $\Delta^n\sqcup_{\te{ev}_{[0]}, \Delta^0, \te{ev}_{[0]}} \Delta^1\hookrightarrow \Delta^n\times \Delta^1$. On the level of anima, the passage to opposite categories\footnote{Note the reindexing. To obtain $\te{Fun}(\Delta^n, (\mathcal{C}_{X/}))^{\te{op}}\simeq \te{Fun}(\Delta^n, (\mathcal{C}_{X/})^{\te{op}} )$ we need to use the isomorphism $\Delta^n\simeq \Delta^{n, \te{op}}$.} has no effect, so we obtain an isomorphism
            \begin{align*}
                |\te{Fun}^\simeq(\Delta^{n-1}, \mathcal{C}_{X/, /X})^\otimes|\simeq |\te{Fun}^{\te{cart}}(\Delta^{n}, \mathcal{C}_{X \backslash})^\otimes|.
            \end{align*}

        This is natural in $[n]\in \Delta^{\te{op}}$ yielding an equivalence
                \begin{align*}
                    K(\mathcal{C}_{X/,/X})=\Omega\varprojlim_{n\in \Delta^{op}} \vert\te{Fun}^\simeq (\Delta^{n-1}, \mathcal{C}_{X/,/X})^{ \otimes}\vert \simeq \Omega\varprojlim_{n\in \Delta^{op}} \vert\te{Fun}^{\te{cart}}(\Delta^n, \mathcal{C}_{X/})^{ \otimes}\vert = T_\mathcal{C}(X)
                \end{align*}
            of spectra. 

            For pointed $\infty$-categories admitting finite limits like $ \mathcal{C}_{X/, /X} $, combine \cite[Definition 2.16, Proposition 8.3]{Barwick} to deduce
            $K(\mathcal{C}_{X/,/X})\simeq K(\te{Stab}(\mathcal{C}_{X/,/X}))$.
        \end{proof}
    
    
    	\begin{remark}\label{def:generaltwists}
    		Using this reformulation, we can also define twists for general geometric set-ups $ (\mathcal{C}, E) $. For any object $ X $, consider $ \mathcal{C}_{E, X/, /X} := ((\mathcal{C}_E)_{/X})_{{(\te{id}_X: X\to X)}/}$, where $ \mathcal{C}_E $ is the subcategory of $ \mathcal{C} $ spanned by morphisms in $E$. For any morphism $ X\to Y $, pulling back yields a map $\mathcal{C}_{E, Y/, /Y}\to \mathcal{C}_{E, X/, /X}$ preserving finite limits. As before, this assembles into a functor 
    			\begin{align*}
    				\mathcal{C}_{E, -/,/-}: \mathcal{C}^{\te{op}}\to \te{Cat}_\infty^\te{lex},
    			\end{align*}
    		composing it with the functor $ \Omega|\te{Corr}(-)| : \te{Cat}_\infty^\te{lex}\to \te{Sp}_{\geq 0}\to \te{CAlg}(\te{Cat}_\infty)$ of Definition \ref{def:twist}, we get 
    			\begin{align*}
    				T_{(\mathcal{C}, E)}:  \mathcal{C}^\te{op}\to \te{Cat}_\infty.
    			\end{align*}
    		A \emph{twist} of a three-functor formalism $ \mathcal{D}: \te{Corr}(\mathcal{C}, E)^\otimes\to \te{Cat}_\infty $ is accordingly defined as a symmetric monoidal natural transformation $ T_{(\mathcal{C}, E)}\to \mathcal{D}^* $.
    		Combining Remark \ref{explanation:twists} and Lemma \ref{lem:QuillenQ}, twists should again correspond to three functor formalisms with the same $ \otimes $'s and $ f^* $'s but $ f_! $'s replaced by $ f_{!'}= f_!(-\otimes \mathcal{L}_f) $, where, given $ f: X\to Y $, $ \mathcal{L}_{f} $ is the image of $ (X\to \Omega^2_X Y\to X) $ under $ |\te{Corr}(\mathcal{C}_{E, X/, /X})|\to \te{Pic}\mathcal{D}(X) $; and all coherences tweaked in a similar fashion as explained in Remark \ref{explanation:twists}.\footnote{Instead of $\Omega^2_X$, we could have chosen any $ \Omega^{2n}_{X} $ for $ n\geq 1 $. We need to pass to even powers to be compatible with Definition \ref{def:twist} because the isomorphism of Lemma \ref{lem:QuillenQ} multiplies by $(-1)$.}
    	\end{remark}
        
        \begin{proposition}\label{prop:twists on affine schemes}
            Let $\mathcal{C}$ be the opposite category of $\mathbb{E}_\infty$-rings over some $\mathbb{E}_\infty$-ring $R$ \footnote{As this base does not really matter, the same statement applies when working over any presheaf of anima on the $\infty$-category of $\mathbb{E}_\infty$-rings.} i.e. the category of derived affine schemes over $\te{Spec}(R)$ and $E$ the class of maps of finite presentation. Let $S$ be an $R$-algebra, then
                \begin{align*}
                    T_{(\mathcal{C}, E)}(S)= K(S),
                \end{align*}
            where $K$ is the connective algebraic K-theory of an $\mathbb{E}_\infty$-ring.
            \end{proposition}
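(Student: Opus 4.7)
The plan is to combine Lemma~\ref{lem:QuillenQ} with the identification of the tangent $\infty$-category of augmented $\mathbb{E}_\infty$-algebras over $S$.

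First, I unfold Remark~\ref{def:generaltwists}: by construction, $T_{(\mathcal{C},E)}(\textup{Spec}(S)) = \Omega|\textup{Corr}(\mathcal{D})|$, where $\mathcal{D} := \mathcal{C}_{E,\textup{Spec}(S)/,/\textup{Spec}(S)}$. Translating along $\textup{Spec}(-)$, the opposite $\mathcal{D}^{\textup{op}}$ is the $\infty$-category $\mathcal{A}_S^{\textup{fp}}$ of $\mathbb{E}_\infty$-$S$-algebras $T$ equipped with an augmentation $T \to S$ splitting the unit, where both $S \to T$ and $T \to S$ are of finite presentation. Since $E$ is stable under pullback and contains all isomorphisms, $\mathcal{D}$ has finite limits, and the object $\textup{id}_{\textup{Spec}(S)}$ (which corresponds to $T = S$) is both initial and terminal in $\mathcal{D}$.

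Second, I apply Lemma~\ref{lem:QuillenQ} to $\mathcal{D}$ with distinguished object $\textup{id}_{\textup{Spec}(S)}$. As this is a zero object, $\mathcal{D}_{\textup{id}/} \simeq \mathcal{D}$ (using initiality) and $\mathcal{D}_{\textup{id}/,/\textup{id}} \simeq \mathcal{D}$ (using terminality), so the lemma yields
\[
T_{(\mathcal{C},E)}(\textup{Spec}(S)) \simeq K(\textup{Stab}(\mathcal{D})).
\]
Since connective algebraic $K$-theory of a stable $\infty$-category is invariant under passage to the opposite, it remains to compute $\textup{Stab}(\mathcal{A}_S^{\textup{fp}})$.

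Third, I invoke Lurie's result \cite[Theorem 7.3.4.13]{HA}: the cotangent-fibre functor $T \mapsto \mathbb{L}_{T/\!/S}$ induces an equivalence $\textup{Stab}((\textup{CAlg}_S)_{/S}) \simeq \textup{Mod}_S$, with quasi-inverse given by the square-zero extension $M \mapsto S \oplus M$. I would then restrict this equivalence to the finite-presentation subcategories: if $T$ is an augmented $S$-algebra whose structure map and augmentation are both of finite presentation, then its cotangent fibre is a perfect $S$-module; conversely, if $M$ is perfect, then $S \oplus M$ is a finitely presented augmented $S$-algebra. This gives $\textup{Stab}(\mathcal{A}_S^{\textup{fp}}) \simeq \textup{Perf}(S)$, and since $K(\textup{Perf}(S)) = K(S)$ by definition of the connective algebraic $K$-theory of an $\mathbb{E}_\infty$-ring, the proposition follows.

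The main obstacle is the third step. While the unrestricted equivalence $\textup{Stab}((\textup{CAlg}_S)_{/S}) \simeq \textup{Mod}_S$ is well-established, showing it descends cleanly to the finite-presentation subcategories requires a careful comparison between the notion of finite presentation used in the Nagata setup (a property of maps of $\mathbb{E}_\infty$-rings) and the notion of compactness in $\textup{Mod}_S$. One must verify that both the cotangent-fibre functor and the square-zero-extension functor preserve the appropriate finiteness conditions in both directions; this is a purely algebraic check one would want to extract cleanly from \cite{HA}.
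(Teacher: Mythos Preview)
Your overall strategy matches the paper's: both reduce via Lemma~\ref{lem:QuillenQ} to identifying $\te{Stab}$ of (the opposite of) finitely presented augmented $S$-algebras with $\te{Mod}_S^{\te{perf}}$. The gap is in your third step.

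The check you propose is not the right one. Cotangent fibre and square-zero are merely an \emph{adjoint pair} at the unstable level, becoming mutual inverses only after stabilising; so knowing that each individually preserves finiteness does not show that the stable equivalence restricts. Concretely, under Lurie's equivalence a module $M$ corresponds to the spectrum object $(S\oplus M[n])_{n\geq 0}$, so what you actually need is: $M$ is perfect if and only if every $S\oplus M[n]$ lies in $\mathcal{A}_S^{\te{fp}}$. Your forward implication (``$T$ f.p.\ $\Rightarrow$ cotangent fibre perfect'') does not recover $M$, since $\Sigma^\infty(S\oplus M)=\mathbb{L}_{(S\oplus M)/S}\otimes_{S\oplus M}S$ is \emph{not} $M$ in general (already for $S=M=\mathbb{Q}$ one computes $\mathbb{Q}\oplus\mathbb{Q}[1]$). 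And your converse (``$M$ perfect $\Rightarrow$ $S\oplus M$ is a finitely presented $\mathbb{E}_\infty$-$S$-algebra'') is far from a clean extraction from \cite{HA}; over a general, possibly nonconnective, $\mathbb{E}_\infty$-ring this requires real work, if it holds at all.

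The paper avoids this entirely via an Ind-completion argument. From $\te{CAlg}(S)_*\simeq\te{Ind}(\te{CAlg}^{\te{f.p.}}(S)_*)$ and the commutation of stabilisation with Ind (\cite[Remark~4.2]{harpaz}) one obtains $\te{Mod}_S\simeq\te{Ind}\bigl(\te{Stab}(\te{CAlg}^{\te{f.p.}}(S)_*^{\te{op}})^{\te{op}}\bigr)$. Passing to compact objects on both sides (\cite[Proposition~7.2.4.2]{HA}) and using that the inner category is idempotent complete gives the identification directly, without ever asking whether any particular square-zero extension is finitely presented. The residual $\te{op}$ is removed by self-duality of $\te{Mod}_S^{\te{perf}}$ (\cite[Proposition~7.2.4.4]{HA}); your route through $K(\mathcal{E})\simeq K(\mathcal{E}^{\te{op}})$ would serve equally well for that last step.
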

            \begin{proof}
            First note that $\te{Stab}(\te{CAlg}^{\te{f.p.}, \te{op}}_{S/, /S})= \te{Stab}(\te{CAlg}^{\te{f.p.}}(S)^\te{op})$. We will identify the latter with $\te{Mod}_S^\te{perf}$ in a round-about way.
            In \cite[Corollary 7.3.4.14]{HA}, Lurie computes $\te{Stab}(\te{CAlg}(S)_*)=\te{Mod}_S$, which by e.g. \cite[Remark 4.2]{harpaz} is equivalent to
            $\te{Ind}(\te{Stab}(\te{CAlg}^{\te{f.p.}}(S)_*^\te{op})^{\te{op}})$. Filtered colimits and opposite categories of idempotent complete $\infty$-categories remain idempotent complete. Thus the fact that $\te{CAlg}(S)_*^\te{f.p.}$ is idempotent complete implies the same holds for $\te{Stab}(\te{CAlg}^{\te{f.p.}}(S)_*^\te{op})^{\te{op}}$. By \cite[Proposition 7.2.4.2]{HA}, the $\infty$-category of compact objects of $\te{Mod}_S$ is $\te{Mod}_S^\te{perf}$. This implies $\te{Stab}(\te{CAlg}^{\te{f.p.}}(S)_*^\te{op})^{\te{op}}\simeq \te{Mod}_S^\te{perf}$. Furthermore, \cite[Proposition 7.2.4.4]{HA} states that passing to duals induces an equivalence of stable $\infty$-categories $\te{Mod}_S^{\te{perf},\te{op}}\overset{\simeq}{\to}\te{Mod}_S^{\te{perf}}$. Finally, we arrive at $T_\mathcal{C}(S)\simeq $ $K(\te{Mod}_S^\te{perf})=K(S)$.
            \end{proof}
        
        	\begin{example}\label{great_example}
        		Let $ \mathcal{C} $ be as in \ref{prop:twists on affine schemes} and $\mathcal{D}:  \te{Corr}(\mathcal{C}, E)^\otimes\to \te{Cat}_\infty$ a three-functor formalism such that $ \mathcal{D}^* $ sends $ X\to \te{QCoh}(X) $, $X\to \te{IndCoh}(X)$ or some variation thereof. Consider the symmetric monoidal natural transformation
        			\begin{align*}
        				T_{(\mathcal{C}, E)} \to \mathcal{D}^* 
        			\end{align*}
        		given by the determinant map $ K(R)\overset{\det}{\to} \te{Pic}(R)\hookrightarrow \mathcal{D}^*(\te{Spec}(R)) $. 
        		
        		This corresponds to the twisted three-functor formalism with lower-shrieks 
        			\begin{align*}
        				f_{!'}= f_!(-\otimes \det \mathbb{L}_f),
        			\end{align*}
				 where $ \mathbb{L}_f $ is the relative cotangent complex.\footnote{The relative cotangent complex appears through the identification $ \te{Stab}(\te{CAlg}^{\te{f.p.}}(R))\simeq \te{Mod}^\te{perf}_R $.}
        	\end{example}

    In the spirit of the first part of the paper, we have:
		\begin{theorem}
			Let $ \mathcal{C} $ be an $ \infty $-category with finite limits, such that all morphisms of $ E $ are $ n $-truncated for some $ n $, then $ T_{(\mathcal{C}, E)}=0 $.
		\end{theorem}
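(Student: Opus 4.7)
The plan is to filter $\mathcal{A} := \mathcal{C}_{E,X/,/X}$ by truncatedness and apply Lemma~\ref{lem:QuillenQ} separately to each truncated piece. I would set $\mathcal{A}_k \subseteq \mathcal{A}$ to be the full subcategory of factorizations $X \xrightarrow{s} Y \xrightarrow{f} X$ with $f$ being $k$-truncated. Since $k$-truncatedness is stable under pullback and $\te{id}_X$ is $(-2)$-truncated, each $\mathcal{A}_k$ is a pointed lex subcategory of $\mathcal{A}$; the pointwise truncatedness hypothesis then gives $\mathcal{A} = \varinjlim_k \mathcal{A}_k$ in $\te{Cat}_\infty^\te{lex}$.

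Since $T: \te{Cat}_\infty^\te{lex} \to \te{Sp}_{\geq 0}$ preserves filtered colimits---a consequence of $\te{Corr}(-)$ being defined via mapping anima out of the finite categories $C(\Delta^n)$, together with $|-|$ and $\Omega$ (on connected $E_\infty$-spaces) also doing so---we obtain $T_{(\mathcal{C},E)}(X) \simeq \varinjlim_k T(\mathcal{A}_k)$. Thus it suffices to show $T(\mathcal{A}_k) = 0$ for each $k$. As $\mathcal{A}_k$ is pointed lex, Lemma~\ref{lem:QuillenQ} applied with $\mathcal{C} = \mathcal{A}_k$ at the zero object (where $(\mathcal{A}_k)_{0/} \simeq \mathcal{A}_k$ because every object receives a unique map from $0$) identifies $T(\mathcal{A}_k) \simeq K(\te{Stab}(\mathcal{A}_k))$, reducing the problem to showing $\te{Stab}(\mathcal{A}_k) = 0$.

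The heart of the argument is then that the loop functor on $\mathcal{A}$ strictly decreases truncatedness. For $(Y,s) \in \mathcal{A}$ one computes $\Omega(Y,s) = (X \times_Y X, \Delta_s)$ with projection to $X$; this projection is the base change of the section $s: X \to Y$ along itself, and $s$ in turn arises as the base change of the diagonal $\Delta_f: Y \to Y \times_X Y$ along $(s,s): X \to Y \times_X Y$. When $f$ is $k$-truncated, $\Delta_f$ is $(k-1)$-truncated, so the projection $X \times_Y X \to X$ is $(k-1)$-truncated. Iterating, $\Omega^{k+2}$ sends $\mathcal{A}_k$ into $\mathcal{A}_{-2}$, which consists only of the basepoint because any $(-2)$-truncated map with a section is an equivalence. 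For any spectrum object $(E_m)$ of $\te{Stab}(\mathcal{A}_k)$ one therefore has $E_0 \simeq \Omega^{k+2} E_{k+2} \simeq *$, and similarly $E_m \simeq *$ for all $m$; hence $\te{Stab}(\mathcal{A}_k) = 0$, concluding the proof.

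The main obstacle I anticipate is justifying that $T$ preserves filtered colimits of lex $\infty$-categories and verifying that Lemma~\ref{lem:QuillenQ}---stated for coslices of a lex category---applies to arbitrary pointed lex $\infty$-categories via its proof. Both should be routine bookkeeping, but the passage through the symmetric monoidal structure on $\te{Corr}$ requires some care.
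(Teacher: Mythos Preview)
Your proof is correct and takes a genuinely different route from the paper's. The paper works directly with $\mathcal{A}=\mathcal{C}_{E,X/,/X}$: it notes that every object is killed by some power of $\Omega$, asserts that $\Omega$ induces an automorphism of $|\mathcal{A}^{\te{cart}}|$, and concludes. You instead filter $\mathcal{A}$ by the uniformly $k$-truncated pieces $\mathcal{A}_k$, use that $T$ commutes with filtered colimits, and then invoke Lemma~\ref{lem:QuillenQ} on each $\mathcal{A}_k$ to obtain $T(\mathcal{A}_k)\simeq K(\te{Stab}(\mathcal{A}_k))=K(0)=0$. Both arguments rest on the same elementary fact that $\Omega$ lowers truncation by one, but your filtration sidesteps the (unproved in the paper) claim that $\Omega$ is an equivalence on the realization, and it handles the non-uniform case cleanly: note that one \emph{cannot} simply argue $\te{Stab}(\mathcal{A})=0$ directly in that case, since e.g.\ the pointed lex category of truncated anima has bounded-above spectra as its stabilization. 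What your approach buys is rigor in the non-uniform setting at the cost of the filtered-colimit bookkeeping; what the paper's approach buys is brevity.

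One point worth making explicit when you write this up: the fact that $\mathcal{A}_k$ is closed under pullbacks needs a short argument beyond ``$k$-truncated is stable under base change''. For a cospan $Y_1\to Y_0\leftarrow Y_2$ in $\mathcal{A}_k$, the map $Y_1\times_{Y_0}Y_2\to Y_1\times_X Y_2$ is a base change of $\Delta_{Y_0/X}$ and hence $(k-1)$-truncated, while $Y_1\times_X Y_2\to X$ is $k$-truncated as a composite of a base change of $f_2$ with $f_1$; thus the structure map of the pullback is $k$-truncated. The two obstacles you anticipate are indeed routine: Lemma~\ref{lem:QuillenQ} applies verbatim to $\mathcal{A}_k$ at its zero object since $(\mathcal{A}_k)_{0/}\simeq\mathcal{A}_k\simeq(\mathcal{A}_k)_{0/,/0}$, and $T$ preserves filtered colimits because $\te{Corr}(-)$ is built from maps out of finite categories and the inclusions $\mathcal{A}_k\hookrightarrow\mathcal{A}$ are fully faithful and lex.
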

        \begin{proof}
            For any object $ X\in \mathcal{C} $, $ \mathcal{C}_{E,  X/, /X} $ still has the property, that every morphism is $ n $-trunca-
            ted. In particular, for any object $Y$, $0\to Y $ is $n$-truncated i.e. $\Omega^{n-1}(Y)=0$.
            Thus, as $\Omega_X$ induces an isomorphism on
            $ |(\mathcal{C}_{E, X/, /X})^\te{cart}|$ and $\varprojlim_n \Omega^n_X=0$, it must be $0$.
        \end{proof}

    \section{Outlook}\label{sect:outlook}

    In this section we collect conjectural generalisations of Scholze's conjecture. 
    
    \begin{remark}
    	In Remark \ref{explanation:twists}, we explained how a twist $ \mathcal{L}: T_\mathcal{C}\to \mathcal{D}^* $ should provide us with another three-functor formalism $ \mathcal{D}_\mathcal{L} $. This other three-functor formalism is defined purely in terms of the cohomological correspondences $f_!(-\otimes \mathcal{L}_f)$. So it naturally lifts to a symmetric monoidal functor $\mathcal{D}_\mathcal{L}: \te{Corr}(\mathcal{C})^\otimes\to \mathfrak{LZ}^\otimes_\mathcal{D} $ into the symmetric monoidal $ (\infty, 2) $-category of cohomological correspondences, see e.g. \cite[Section 2.2]{Zav23}. As $ \mathcal{D}^* $ remains unchanged, this yields a commutative diagram
    		\begin{center}
    			\begin{tikzcd}
    						& \mathcal{C}^{\te{op}, \sqcup}\ar[dl, hookrightarrow]\ar[dr, "\mathcal{D}^*"]\\
    				\te{Corr}(\mathcal{C})^\otimes\arrow[rr, "\mathcal{D}_\mathcal{L}"] && \mathfrak{LZ}_\mathcal{D}^\otimes.
    			\end{tikzcd}
    		\end{center}
    	
    	Any such diagram yields a three-functor formalisms via $ \te{Corr}(\mathcal{C})^\otimes\to \mathfrak{LZ}_\mathcal{D}^\otimes \to \te{Cat}_\infty$, which leaves $ \mathcal{D}^* $ unchanged. On the other hand, the correspondence $1 \leftarrow \Delta_X\hookrightarrow X\times X $ induces two self-duality data on $ X $ in $ \mathfrak{LZ}_\mathcal{D} $ via $ \mathcal{D} $ resp. $ \mathcal{D}_\mathcal{L} $ such that $f_!$ resp. $f_!(-\otimes \mathcal{L}_f)$ are identified with the dual map of $ f^* $. Any two duality data differ by an isomorphism, so we get an isomorphism of $ X $ in $ \mathfrak{LZ}_\mathcal{D} $ i.e. a $ \otimes $-invertible object $ \mathcal{L}_{X/1} \in \mathcal{D}(X)$. The data identifying the duals of $ f^* $ w.r.t. these two duality data should yield all coherence conditions to fashion a twist $ T_\mathcal{C}\to \mathcal{D}^* $.

    	This relates directly to the main question of this article as both of the properties ``cohomologically proper" and ``cohomologically \'etale" can intrinsically  be formulated in the symmetric monoidal $ (\infty, 2) $-category $ \mathfrak{LZ}^\otimes_\mathcal{D} $ of cohomological correspondences as soon as we pick out the $f^*$'s by the symmetric monoidal functor $ \mathcal{C}^{\te{op}, \sqcup}\to \mathfrak{LZ}_\mathcal{D}^\otimes $.

    \end{remark}
    
    \begin{conjecture}\label{conjecture:LZ}
 			Let $\mathcal{D}: \te{Corr}(\mathcal{C})^\otimes\to \te{Cat}_\infty$ a three-functor formalism. Denote by $\mathfrak{LZ}_\mathcal{D}^\otimes$ its symmetric monoidal $(\infty, 2)$-category of cohomological correspondences with underlying symmetric monoidal $\infty$-category $\te{LZ}_\mathcal{D}^\otimes$. There is a canonical isomorphism of anima
 			\begin{align*}
 				\te{Hom}_{\te{CAlg}(\te{Cat}_{\infty})_{ \mathcal{C}^{\te{op}, \sqcup}/}}(\te{Corr}(\mathcal{C})^\otimes, \textnormal{LZ}_\mathcal{D}^\otimes)\simeq
     \te{Hom}_{\te{Fun}(\mathcal{C}^\te{op}, \te{CAlg}(\te{Cat}_\infty))}(T_\mathcal{C}, \mathcal{D}^*).
 			\end{align*}
 	\end{conjecture}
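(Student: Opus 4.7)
My strategy is to construct mutually inverse maps between the two anima. In the direction from twists to lifts, a symmetric monoidal natural transformation $\mathcal{L}: T_\mathcal{C} \to \mathcal{D}^*$ unpacks, via Remark \ref{explanation:twists}, into a coherent system of $\otimes$-invertible objects $\mathcal{L}_f \in \mathcal{D}(X)$ indexed by morphisms $f: X \to Y$ in $\mathcal{C}$, together with base change isomorphisms, composition isomorphisms $\mathcal{L}_{hg} \simeq \mathcal{L}_g \otimes g^*\mathcal{L}_h$, and K\"unneth isomorphisms. From this data I would define a symmetric monoidal lift $\mathcal{D}_\mathcal{L}: \te{Corr}(\mathcal{C})^\otimes \to \te{LZ}_\mathcal{D}^\otimes$ sending a span $X \xleftarrow{p} Z \xrightarrow{q} Y$ to the cohomological correspondence $(X \xleftarrow{p} Z \xrightarrow{q} Y, \mathcal{L}_q)$. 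Composition in $\te{LZ}_\mathcal{D}^\otimes$ is precisely base change and tensor of apex objects, so the coherence relations on the $\mathcal{L}_f$ make $\mathcal{D}_\mathcal{L}$ functorial and symmetric monoidal; on the subcategory $\mathcal{C}^{\te{op}, \sqcup}$ every span has identity right leg and $\mathcal{L}_{\te{id}} = 1$, so the lift canonically sits under $\mathcal{C}^{\te{op}, \sqcup}$. Turning this recipe into an actual symmetric monoidal functor of $\infty$-operads should follow the Liu-Zheng straightening procedure used in Section \ref{sect:conjecture}, applied to a multisimplicial-nerve model of $\te{LZ}_\mathcal{D}^\otimes$.

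For the reverse direction I would exploit the self-duality of every object in $\mathfrak{LZ}_\mathcal{D}^\otimes$. Given a lift $\mathcal{F}: \te{Corr}(\mathcal{C})^\otimes \to \te{LZ}_\mathcal{D}^\otimes$ under $\mathcal{C}^{\te{op}, \sqcup}$, the diagonal correspondence $1 \leftarrow X \xrightarrow{\Delta_X} X \times X$ equips $X$ with a self-duality datum in $\te{Corr}(\mathcal{C})^\otimes$, and $\mathcal{F}$ transports this to a self-duality datum on $X$ in $\te{LZ}_\mathcal{D}^\otimes$. Comparing with the tautological self-duality coming from the canonical inclusion $\te{Corr}(\mathcal{C})^\otimes \hookrightarrow \te{LZ}_\mathcal{D}^\otimes$, uniqueness of duals up to canonical isomorphism produces a $\otimes$-invertible object $\mathcal{L}_{X/1} \in \mathcal{D}(X)$. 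Running the same comparison on the pasting diagrams associated to every morphism $f: X \to Y$ and its factorizations yields the higher coherences needed to assemble a symmetric monoidal map $|\te{Corr}(\mathcal{C}_{X/})^\otimes| \to \textbf{B}\te{Pic}\,\mathcal{D}(X)$, functorial in $X$; looping, this is exactly a symmetric monoidal natural transformation $T_\mathcal{C} \to \mathcal{D}^*$.

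The hard part will be carrying out the coherence work in the reverse direction: comparing duality data across all simplicial levels of $\te{Corr}(\mathcal{C}_{X/})^\otimes$ and assembling the result into a genuine map of symmetric monoidal anima is exactly the step where the paper's own sketch hedges with \emph{should}. A promising route is to leverage an $(\infty, 2)$-categorical universal property of $\te{Corr}(\mathcal{C})^\otimes$ in the spirit of \cite{gaitsgory_rozenblyum} and \cite{Ste20}, so that the required pasting calculus furnishes the higher coherences automatically. Once both constructions are in place, their mutual inverseness should be formal: starting from a twist $\mathcal{L}$, the comparison of the two self-duality data on $X$ in $\mathfrak{LZ}_\mathcal{D}$ induced by $\mathcal{D}_\mathcal{L}$ is tautologically $\mathcal{L}_{X/1}$; conversely, a lift under $\mathcal{C}^{\te{op}, \sqcup}$ is determined by its restriction to $\mathcal{C}^{\te{op}, \sqcup}$ together with the choice of apex objects on forward spans, and these together reproduce $\mathcal{F}$ by the composition law in $\te{LZ}_\mathcal{D}^\otimes$.
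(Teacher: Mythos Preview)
The statement you are attempting to prove is labeled as a \emph{conjecture} in the paper and is not proved there; it appears in Section~\ref{sect:outlook} (Outlook) precisely as an open problem. There is therefore no ``paper's own proof'' to compare your proposal against. The Remark immediately preceding Conjecture~\ref{conjecture:LZ} sketches exactly the heuristic you describe: build $\mathcal{D}_\mathcal{L}$ from a twist via the coherence data of Remark~\ref{explanation:twists}, and in the reverse direction extract $\mathcal{L}_{X/1}$ by comparing the two self-duality data on $X$ in $\mathfrak{LZ}_\mathcal{D}$. Your proposal is a faithful elaboration of that sketch, not an independent approach.

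You correctly identify the genuine gap yourself: assembling the comparison of duality data into a map of symmetric monoidal anima functorial in $X$, and likewise promoting the recipe $(X\leftarrow Z\to Y)\mapsto (X\leftarrow Z\to Y,\mathcal{L}_q)$ to an honest symmetric monoidal functor of $\infty$-operads, are precisely the steps the paper flags with \emph{should} and does not carry out. Invoking ``the Liu--Zheng straightening procedure used in Section~\ref{sect:conjecture}'' does not close this gap: that machinery passes adjoints in Beck--Chevalley squares and does not obviously produce functors into $\te{LZ}_\mathcal{D}^\otimes$ with prescribed apex objects. Similarly, appealing to an $(\infty,2)$-categorical universal property of $\te{Corr}(\mathcal{C})^\otimes$ in the style of \cite{gaitsgory_rozenblyum} or \cite{Ste20} is plausible but not established in the generality needed here; the paper itself notes that those results are formulated under hypotheses (e.g.\ $(-1)$-truncatedness of one class of maps) that do not apply. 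In short, your proposal is a reasonable outline of what a proof would have to accomplish, and it matches the paper's own heuristics, but it does not supply the missing coherence arguments and so remains, like the paper's discussion, a strategy rather than a proof.
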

    
    \begin{conjecture}
        Let $\mathcal{C}$ be an $\infty$-category with finite limits. For any natural number $n$, there exists a categorical equivalence
            \begin{align*}
                \delta_{n,1}^*\mathcal{C}^{\te{cart}, \otimes}\to \underbrace{\te{Corr}(\mathcal{C})^\otimes\otimes_{\mathcal{C}^{\te
                op}, \sqcup}\cdots \otimes_{\mathcal{C}^{\te
                op}, \sqcup} \te{Corr}(\mathcal{C})^\otimes}_{n\te{ times}}
            \end{align*}
        with $\otimes$ denoting the homotopy pushout in $\te{CAlg}(\te{Cat}_\infty)$.
    \end{conjecture}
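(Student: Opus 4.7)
The plan is to proceed by induction on $n$, leveraging the Liu--Zheng machinery that powers the rest of the paper. For the base case $n = 1$, the claim that $\delta_{1,1}^*\mathcal{C}^{\te{cart}, \otimes} \simeq \te{Corr}(\mathcal{C})^\otimes$ is precisely the combined content of \cite[Proposition 6.1.3]{LZa} and \cite[Example 4.30]{LZb}, both of which are already used in the proof of Theorem \ref{theorem:scholzes conjecture}.

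For the inductive step, I would rewrite the $n$-fold tensor product on the right as an iterated pushout in $\te{CAlg}(\te{Cat}_\infty)$, so that the claim for $n-1$ reduces the problem to showing that the square
$$
\begin{tikzcd}
\mathcal{C}^{\te{op},\sqcup} \arrow[r]\arrow[d] & \te{Corr}(\mathcal{C})^\otimes \arrow[d] \\
\delta_{n-1,1}^*\mathcal{C}^{\te{cart},\otimes} \arrow[r] & \delta_{n,1}^*\mathcal{C}^{\te{cart},\otimes}
\end{tikzcd}
$$
is a homotopy pushout, where the arrows are induced by inserting identities in the new cartesian direction. Combinatorially, a $k$-simplex of $\delta_{n,1}^*\mathcal{C}^{\te{cart},\otimes}$ is an $n$-dimensional cartesian hypercube of height $k$ in $\mathcal{C}^{\te{op},\sqcup}$, which is naturally interpreted as a coherent system of $n$ composable spans each parametrised over $C(\Delta^k)$; on the other side, the pushout glues an $(n-1)$-fold composition of spans to a single span along their common corner, and the candidate equivalence is the obvious gluing map.

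To verify that this gluing functor is a categorical equivalence in $\te{CAlg}(\te{Cat}_\infty)$, I would adapt the criterion of \cite[Theorem 5.4]{LZb} to the relative setting in which one of the factors is already a symmetric monoidal quasi-category (namely $\te{Corr}(\mathcal{C})^\otimes$) rather than a plain multisimplicial set. Concretely, two routes seem plausible: (i) a direct simplex-by-simplex extension argument, establishing essential surjectivity by filling in hypercube faces and full faithfulness via contractibility of the anima of fillers, in the spirit of the existing Liu--Zheng proofs; or (ii) a universal-property argument identifying both the pushout and the diagonal as classifying the same presheaf on $\te{CAlg}(\te{Cat}_\infty)_{\mathcal{C}^{\te{op},\sqcup}/}$, namely the functor sending $\mathcal{E}$ to the anima of $n$-tuples of symmetric monoidal functors $\te{Corr}(\mathcal{C})^\otimes \to \mathcal{E}$ that agree on $\mathcal{C}^{\te{op},\sqcup}$.

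The main obstacle is precisely this pushout property, where two difficulties compound: one has to simultaneously control the symmetric monoidal enhancement and the fact that $\mathcal{C}^{\te{op},\sqcup} \hookrightarrow \te{Corr}(\mathcal{C})^\otimes$ is not a monomorphism of strict objects but rather a cofibration only after passage to an appropriate model structure on (operadic) multisimplicial sets. Route (ii) looks more tractable in principle, but it presupposes a workable description of mapping anima into $\delta_{n,1}^*\mathcal{C}^{\te{cart},\otimes}$, which in turn requires a higher-dimensional analogue of the multisimplicial analysis of \cite[Sections 4--5]{LZb}. I expect that the required extension of \cite[Theorem 5.4]{LZb} is the genuinely new technical input, and the remainder of the argument should then formally follow by induction as outlined above.
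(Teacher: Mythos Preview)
The statement you are attempting to prove is labelled a \emph{Conjecture} in the paper and appears in Section~\ref{sect:outlook} (``Outlook''), where the authors explicitly ``collect conjectural generalisations of Scholze's conjecture.'' There is no proof in the paper to compare your proposal against: the authors state the claim as open, and the surrounding remark only explains informally why the two sides should represent the same functor on $\te{CAlg}(\te{Cat}_\infty)_{\mathcal{C}^{\te{op},\sqcup}/}$.

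Your outline is a reasonable strategy and you are candid about where the actual content lies. The base case is indeed immediate from the Liu--Zheng results you cite. The inductive step, however, is exactly the conjecture itself in disguise: reducing to the pushout square you draw does not simplify matters, since establishing that this square is a pushout in $\te{CAlg}(\te{Cat}_\infty)$ requires precisely the control over mapping anima out of $\delta_{n,1}^*\mathcal{C}^{\te{cart},\otimes}$ that you flag as missing. Your route~(ii) is essentially a restatement of the conjecture (both sides corepresent the same functor), and route~(i) would need a genuinely new multisimplicial rectification theorem beyond \cite[Theorem~5.4]{LZb}, as you yourself note. So your proposal correctly locates the difficulty but does not resolve it; this is consistent with the paper's decision to leave the statement as a conjecture.
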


    \begin{remark}
        A symmetric monoidal functor $F: \te{Corr}(\mathcal{C})^\otimes\otimes_{\mathcal{C}^{\te
                op}, \sqcup}\cdots \otimes_{\mathcal{C}^{\te
                op}, \sqcup} \te{Corr}(\mathcal{C})^\otimes\to \mathcal{E}^\otimes$
        is nothing but $n$ symmetric monoidal functors $F_i: \te{Corr}(\mathcal{C})^\otimes\to \mathcal{E}^\otimes$ that agree on $\mathcal{C}^{\te{op}, \sqcup} $. For example, take $\mathcal{E}^\otimes=\te{LZ}_\mathcal{D}^\otimes$ and $F_1: \te{Corr}(\mathcal{C})^\otimes \to \te{LZ}_\mathcal{D}^\otimes$ the canonical functor. Then according to Conjecture \ref{conjecture:LZ}, $F_i$ are given by twists $\mathcal{L}_i: T\to \mathcal{D}^*$ and thus, for any pullback square
            \begin{center}
                \begin{tikzcd}
                    W\ar[r, "\bar{g}"]\ar[d, "\bar{f}"] & Z\ar[d, "f"] \\
                    X\ar[r, "g"] & Y
                \end{tikzcd}
            \end{center}
        we get an isomorphism $ F_i(f_!)\circ F_j(\bar{g}_!)\simeq f_!(\bar{g}_!(-\otimes \mathcal{L}_{j, \bar{g}})\otimes \mathcal{L}_{i, f})\simeq f_!\bar{g}_!(-\otimes \mathcal{L}_{j, \bar{g}}\otimes \bar{g}^*\mathcal{L}_{i, f})\simeq g_!\bar{f}_!(-\otimes \bar{f}^*\mathcal{L}_{j, g}\otimes \mathcal{L}_{i, \bar{f}})
        \simeq F_j(g_!)\circ F_i(\bar{f}_!) $. Together, this assembles into a map $\hat{F}:\delta^*_{n,1} \mathcal{C}^{\te{cart}, \otimes}\to \te{LZ}_\mathcal{D}^\otimes$. Conversely, given such a map, we define the twists $\mathcal{L}_i$ in terms of
         $\delta_2^*(\mathcal{C}_{X/, /X}^\te{cart} )\to \te{BPic}(\mathcal{D}(X))$ by sending a pullback square
            \begin{center}
                    \begin{tikzcd}
                        X \arrow[r, "a"] & A \arrow[r, "\bar{g}"] \arrow[d, "\bar{f}"'] & B \arrow[d, "f"] &   \\
                         & C \arrow[r, "g"']                            & D \arrow[r, "d"] & X
                    \end{tikzcd}
            \end{center} 
        to $d_! \phi a_!1_X$, where $\phi: \hat{F}(A)\to \hat{F}(D)$ is $\hat{F}$ applied to the square in the $(1, j)$-direction.

    \end{remark}

\bibliographystyle{alphaurl}		
\bibliography{bibliography}    
\end{document}